\def\BI{\mathfrak{BI}}
\def\N{\mathbb N}
\def\F{\mathbb F}
\def\Z{\mathbb Z}
\numberwithin{equation}{section}
\newtheorem{Theorem}{Theorem}[section]
\newtheorem{Corollary}[Theorem]{Corollary}
\newtheorem{Lemma}[Theorem]{Lemma}
\newtheorem{Proposition}[Theorem]{Proposition}
 { \theoremstyle{definition}
\newtheorem{Definition}[Theorem]{Definition}
\newtheorem{Example}[Theorem]{Example} }
\begin{document}

\allowdisplaybreaks

\newcommand{\arXivNumber}{1910.11446}

\renewcommand{\PaperNumber}{018}

\FirstPageHeading

\ShortArticleName{Finite-Dimensional Irreducible Modules of the Racah Algebra at Characteristic Zero}

\ArticleName{Finite-Dimensional Irreducible Modules\\ of the Racah Algebra at Characteristic Zero}

\Author{Hau-Wen HUANG~$^\dag$ and Sarah BOCKTING-CONRAD~$^\ddag$}

\AuthorNameForHeading{H.-W.~Huang and S.~Bockting-Conrad}

\Address{$^\dag$~Department of Mathematics, National Central University, Chung-Li 32001, Taiwan}
\EmailD{\href{mailto:hauwenh@math.ncu.edu.tw}{hauwenh@math.ncu.edu.tw}}

\Address{$^\ddag$~Department of Mathematical Sciences, DePaul University, Chicago, Illinois, USA}
\EmailD{\href{mailto:sarah.bockting@depaul.edu}{sarah.bockting@depaul.edu}}

\ArticleDates{Received November 12, 2019, in final form March 16, 2020; Published online March 24, 2020}

\Abstract{Assume that $\F$ is an algebraically closed field with characteristic zero. The Racah algebra~$\Re$ is the unital associative $\F$-algebra defined by generators and relations in the following way. The generators are $A$, $B$, $C$, $D$ and the relations assert that $[A,B]=[B,C]=[C,A]=2D$
and that each of $[A,D]+AC-BA$, $[B,D]+BA-CB$, $[C,D]+CB-AC$ is central in $\Re$. In this paper we discuss the finite-dimensional irreducible $\Re$-modules in detail and classify them up to isomorphism. To do this, we apply an infinite-dimensional $\Re$-module and its universal property. We additionally give the necessary and sufficient conditions for $A$, $B$, $C$ to be diagonalizable on finite-dimensional irreducible $\Re$-modules.}

\Keywords{Racah algebra; quadratic algebra; irreducible modules; tridiagonal pairs; universal property}

\Classification{81R10; 16S37}

\section{Introduction}

Throughout this paper, we adopt the following conventions. Let $\F$ denote an algebraically closed field and let $\operatorname{char}\F$ denote the characteristic of~$\F$. Let $\Z$ denote the set of integers and let $\N$ denote the set of nonnegative integers. The bracket $[\,,\,]$ stands for the commutator.

In this paper we consider the Racah algebra $\Re$ over~$\F$ defined by generators and relations as follows. The generators are $A$, $B$, $C$, $D$ and the relations assert that
 \begin{gather*}
 [A,B]=[B,C]=[C,A]=2D
\end{gather*}
 and that each of
\begin{gather*}
[A,D]+AC-BA, \qquad [B,D]+BA-CB, \qquad [C,D]+CB-AC
\end{gather*}
is central in $\Re$. The Racah algebra $\Re$ is a universal analog of the original Racah algebras which first appeared in~\cite{Levy1965}. In that paper, the original Racah algebras were used to establish a link between representation theory and the quantum mechanical coupling of three angular momenta. Since that time, the connections between the Racah algebras and many other areas have been explored. We mention a few of them here. Their connections with the additive double-affine Hecke algebra of type $\big(C_1^\vee,C_1\big)$, the Bannai--Ito algebra, and the Lie algebras $\mathfrak{su}(2)$, $\mathfrak{su}(1,1)$ were investigated in \cite{gvz2013,R&BI2015,zhedanov1988,Huang:R<BImodules}. Their realizations via the Racah polynomials, the intermediate Casimir operators, and the superintegrable models in two dimensions were presented in \cite{Galbert,R&LD2014,gvz2014,R&BI2015,quadratic1991}. For information concerning the higher rank Racah algebras, see~\cite{HR2017,HR2019}.

We now mention an error in the literature on Racah algebras.
In \cite{Rmodule2019}, the authors considered the finite-dimensional irreducible modules of the original Racah algebras when $\operatorname{char}\F=0$. In \cite[Lemma~5.6]{Rmodule2019}, it was claimed that the defining generators can be diagonalized on any finite-dimensional irreducible module of the Racah algebras. This result was then used in their classification of finite-dimensional irreducible modules of the Racah algebras in \cite[Section~6]{Rmodule2019}. It turns out that \cite[Lemma~5.6]{Rmodule2019} is conditional. We give the following example to help illustrate the issue arising in~\cite{Rmodule2019}.

\begin{Example}\label{exam:R}
It is routine to verify that there exists a five-dimensional $\Re$-module $V$ that has an $\F$-basis $\{v_i\}_{i=0}^4$ with respect to which the matrices representing $A$, $B$, $C$, $D$ are $\frac{1}{4}$ times
\begin{gather*}
\begin{pmatrix}
15 &0 &0 &0 &0
\\
4 &3 &0 &0 &0
\\
0 &4 &-1 &0 &0
\\
0 &0 &4 &3 &0
\\
0 &0 &0 &4 &15
\end{pmatrix},
\qquad
\begin{pmatrix}
15 &-36 &0 &0 &0
\\
0 &3 &-6 &0 &0
\\
0 &0 &-1 &-6 &0
\\
0 &0 &0 &3 &-36
\\
0 &0 &0 &0 &15
\end{pmatrix},
\\
\begin{pmatrix}
-9 &36 &0 &0 &0
\\
-4 &15 &6 &0 &0
\\
0 &-4 &23 &6 &0
\\
0 &0 &-4 &15 &36
\\
0 &0 &0 &-4 &-9
\end{pmatrix},
\qquad
\begin{pmatrix}
18 &-54 &0 &0 &0
\\
6 &-15 &-3 &0 &0
\\
0 &2 &0 &3 &0
\\
0 &0 &-2 &15 &54
\\
0 &0 &0 &-6 &-18
\end{pmatrix},
\end{gather*}
respectively. It follows that
\begin{gather*}
[A,D]+AC-BA=[B,D]+BA-CB=[C,D]+CB-AC=0
\end{gather*}
on the $\Re$-module $V$. For each of $A$, $B$, $C$, it is straightforward to verify that its minimal polynomial on~$V$ is
\begin{gather*}
\left(
x+\frac{1}{4}
\right)
\left(
x-\frac{3}{4}
\right)^2
\left(
x-\frac{15}{4}
\right)^2.
\end{gather*}
Therefore none of $A$, $B$, $C$ is diagonalizable on~$V$. We now show that $V$ is in fact irreducible. Let~$W$ denote a nonzero $\Re$-submodule of~$V$. We will show that $W=V$. Observe that the element~$B$ has exactly three eigenvalues on~$V$, namely $\frac{15}{4}$, $\frac{3}{4}$, $-\frac{1}{4}$. A direct calculation yields that the corresponding eigenspaces are each of dimension~$1$ and are spanned by
\begin{gather}\label{counterexample}
v_0,
\qquad
3 v_0+v_1,
\qquad
27 v_0+12 v_1+ 8v_2,
\end{gather}
respectively. Since $W$ is nonzero, at least one of $\frac{15}{4}$, $\frac{3}{4}$, $-\frac{1}{4}$ is an eigenvalue of $B$ on $W$. Therefore~$W$ contains at least one of the elements listed in~(\ref{counterexample}). Observe that the $\Re$-module $V$ is generated by $v_0$. Thus, if $v_0\in W$ then $W=V$. If $3 v_0+v_1\in W$ then
\begin{gather*}
v_0=
\frac{2}{9}
\left(
A-2D
-\frac{9}{4}
\right)
(3v_0+v_1)\in W
\end{gather*}
and hence $W=V$. If $27 v_0+12 v_1+8v_2\in W$ then
\begin{gather*}
3 v_0+v_1
=-\frac{1}{18}
\left(
A+2D
-\frac{11}{4}
\right)
(27 v_0+12 v_1+ 8v_2)
\in W
\end{gather*}
and hence $W=V$. Therefore $W=V$. Since the $\Re$-module $V$ is irreducible, we now have a~counterexample to \cite[Lemma~5.6]{Rmodule2019}.
\end{Example}

In light of the above example, we see that the finite-dimensional irreducible $\Re$-modules are not yet completely classified. The goal of this paper is to provide such a classification. The idea of our classification comes from~\cite{Huang:2015}. We mention that a similar issue arises in the case of the Bannai--Ito algebra~$\BI$ \cite{BImodule2016} which is addressed by the first author in~\cite{Huang:BImodule}. The result \cite[Theorem~5.4]{Huang:R<BI} reveals that the Racah algebra $\Re$ is isomorphic to an $\F$-subalgebra of~$\BI$. As an application of~\cite{Huang:BImodule} and this result, the lattices of $\Re$-submodules of finite-dimensional irreducible $\BI$-modules are classified in~\cite{Huang:R<BImodules}.

The outline of this paper is as follows. In Section~\ref{s:classification} we state our classification of finite-dimensional irreducible $\Re$-modules in Theorem~\ref{thm:classification}. In Section~\ref{s:Verma} we display an infinite-dimen\-sio\-nal $\Re$-module and describe its universal property. In Section~\ref{s:irr} we give necessary and sufficient conditions for the irreducibility of finite-dimensional $\Re$-modules. In Section~\ref{s:iso} we study the isomorphism classes of finite-dimensional irreducible $\Re$-modules. In Section~\ref{s:proof} we give our proof of Theorem~\ref{thm:classification}.

\section{Statement of results}\label{s:classification}
In this section we more formally introduce the Racah algebra~$\Re$ and state the main result of the paper which gives a classification of the finite-dimensional irreducible modules of the Racah algebra~$\Re$. This main result will be proved later in Section~\ref{s:proof}.

\begin{Definition}[{\cite[Definition 3.1]{SH:2017-1}}] \label{defn:Racah}
The {\it Racah algebra} $\Re$ is the unital associative $\F$-algebra defined by generators and relations in the following way. The generators are $A$, $B$, $C$, $D$. The relations state that
\begin{gather}\label{r:D}
[A,B]=[B,C]=[C,A]=2D
\end{gather}
and that each of
\begin{gather*}
[A,D]+AC-BA, \qquad [B,D]+BA-CB, \qquad[C,D]+CB-AC
\end{gather*}
is central in $\Re$.
\end{Definition}
It follows from the above definition that the element $A+B+C$ is also central in $\Re$. For notational convenience, we let
\begin{gather}
\alpha = [A,D]+AC-BA, \label{alpha} \\
\beta = [B,D]+BA-CB,\label{beta}\\
\gamma = [C,D]+CB-AC,\label{gamma}\\
\delta = A+B+C. \label{delta}
\end{gather}

\begin{Lemma}\label{lem:delta}\quad
\begin{enumerate}\itemsep=0pt
\item[$(i)$] The Racah algebra $\Re$ is generated by the elements $A$, $B$, $C$.
\item[$(ii)$] The Racah algebra $\Re$ is generated by the elements $A$, $B$, $\delta$.
\end{enumerate}
\end{Lemma}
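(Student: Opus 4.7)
The plan is to observe that both statements follow immediately from the defining relations by noting that one of the four generators is redundant in each case.

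For part (i), the key observation is that the relation $[A,B]=2D$ in~\eqref{r:D} allows us to solve for $D$ in terms of $A$ and $B$. Since $\operatorname{char}\F = 0$, we may divide by $2$ to obtain $D = \frac{1}{2}[A,B]$, and hence $D$ lies in the $\F$-subalgebra of $\Re$ generated by $A$ and $B$. Consequently, the subalgebra generated by $A$, $B$, $C$ contains all four generators $A$, $B$, $C$, $D$ of $\Re$, and so must be all of $\Re$.

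For part (ii), the element $\delta = A + B + C$ defined in~\eqref{delta} can be used to express $C = \delta - A - B$. Therefore $C$ lies in the subalgebra generated by $A$, $B$, $\delta$. Combining this with part~(i), the subalgebra generated by $A$, $B$, $\delta$ contains $A$, $B$, $C$, and hence equals $\Re$.

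There is no real obstacle here; the lemma is a direct consequence of the defining relations together with the definition of $\delta$. The only minor point worth noting is the use of $\operatorname{char}\F = 0$ (or, more weakly, $\operatorname{char}\F \ne 2$) to legitimately divide by $2$ when recovering $D$ from the commutator $[A,B]$.
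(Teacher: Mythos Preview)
Your proof is correct and follows essentially the same approach as the paper's own proof: express $D$ via the relation $[A,B]=2D$ for part~(i), then express $C=\delta-A-B$ for part~(ii). Your explicit remark about needing $\operatorname{char}\F\neq 2$ to divide by~$2$ is a helpful clarification that the paper leaves implicit.
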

\begin{proof}
(i) By~(\ref{r:D}) the element $D$ can be expressed in terms of $A$, $B$. Hence (i) follows from Definition~\ref{defn:Racah}.
(ii) By (\ref{delta}) the element $C$ can be expressed in terms of $A$, $B$, $\delta$. Hence (ii) follows from~(i).
\end{proof}

\begin{Lemma}\label{lem:presentationR}The $\F$-algebra $\Re$ has a presentation given by generators $A$, $B$, $\alpha$, $\beta$, $\delta$ and relations
\begin{gather}
A^2 B-2 A B A+B A^2-2AB-2BA=2A^2-2A\delta +2\alpha,\label{AAB}\\
AB^2-2BAB+B^2 A-2 AB-2 BA= 2B^2-2B\delta-2\beta,\label{ABB}\\
\alpha A = A\alpha, \qquad\beta A = A\beta, \qquad\delta A = A\delta,\notag\\
\alpha B = B\alpha,\qquad\beta B = B\beta,\qquad\delta B = B\delta,\qquad
 \alpha \delta = \delta \alpha,\qquad \beta \delta = \delta \beta. \notag
\end{gather}
\end{Lemma}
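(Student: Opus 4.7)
The plan is to establish the presentation by constructing mutually inverse $\F$-algebra homomorphisms between $\Re$ and the algebra $\R'$ with the proposed presentation. By Lemma~\ref{lem:delta}(ii), the elements $A$, $B$, $\delta$ already generate $\Re$, so $A,B,\alpha,\beta,\delta$ also generate $\Re$; hence I only need to verify (a)~that the listed relations hold in $\Re$ and (b)~that they imply all relations of $\Re$.

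For (a), the commutativity relations are immediate because $\alpha$, $\beta$ are central in $\Re$ by Definition~\ref{defn:Racah} and $\delta=A+B+C$ is central as noted in the text. For (\ref{AAB}) and (\ref{ABB}), I substitute $D=\tfrac{1}{2}(AB-BA)$ and $C=\delta-A-B$ into the defining expressions (\ref{alpha}) and (\ref{beta}):
\begin{gather*}
2\alpha = A^2B-2ABA+BA^2+2A\delta-2A^2-2AB-2BA,\\
2\beta = 2BAB-AB^2-B^2A+2BA-2B\delta+2AB+2B^2,
\end{gather*}
which rearrange to exactly (\ref{AAB}) and (\ref{ABB}). Thus the assignment $A\mapsto A$, $B\mapsto B$, $\alpha\mapsto\alpha$, $\beta\mapsto\beta$, $\delta\mapsto\delta$ extends to a surjective $\F$-algebra homomorphism $\phi\colon \R'\to\Re$.

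For (b), I construct a homomorphism $\psi\colon\Re\to\R'$ from the original four-generator presentation of $\Re$ by sending $A\mapsto A$, $B\mapsto B$, $C\mapsto \delta-A-B$, $D\mapsto \tfrac{1}{2}(AB-BA)$, and checking that the defining relations of $\Re$ hold in~$\R'$. The three commutator relations in (\ref{r:D}) are immediate in $\R'$: two of them reduce to $-[B,A]=2D$ using the fact that $\delta$ commutes with $A$ and $B$. To verify that $[A,D]+AC-BA$ is central, I compute (inside $\R'$, using the image of $D$) that this expression equals $\alpha$ by virtue of relation~(\ref{AAB})—this is the reverse of the calculation used in~(a)—and $\alpha$ is central by the commutativity relations. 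Symmetrically, $[B,D]+BA-CB=\beta$ using~(\ref{ABB}). For the third central element, I exploit the identity
\begin{gather*}
\bigl([A,D]+AC-BA\bigr)+\bigl([B,D]+BA-CB\bigr)+\bigl([C,D]+CB-AC\bigr)=[A+B+C,D],
\end{gather*}
whose right-hand side vanishes in $\R'$ because $\delta$ commutes with both $A$ and $B$, hence with $D$; therefore $[C,D]+CB-AC=-\alpha-\beta$ in $\R'$, which is central. This makes $\psi$ well-defined.

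Finally, $\psi\circ\phi$ fixes each generator $A,B,\alpha,\beta,\delta$ of $\R'$ and $\phi\circ\psi$ fixes each generator $A,B,C,D$ of $\Re$ (using $\delta-A-B=C$ and $\tfrac{1}{2}(AB-BA)=D$ in $\Re$), so $\phi$ and $\psi$ are mutually inverse and give the required presentation. The only mildly delicate step is the bookkeeping in (b) showing that the third centrality condition in $\Re$ is not an extra relation but is automatic in $\R'$; this is handled by the $\alpha+\beta+\gamma=[\delta,D]=0$ identity just displayed.
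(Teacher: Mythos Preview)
Your proof is correct and follows essentially the same approach as the paper: eliminate $C$ and $D$ from the presentation in Definition~\ref{defn:Racah} using $C=\delta-A-B$ and $D=\tfrac{1}{2}[A,B]$. The paper's proof merely asserts that this elimination works (or alternatively appeals to \cite[Proposition~3.4]{SH:2017-1}), whereas you carry it out explicitly by building mutually inverse homomorphisms and, in particular, handle the only nontrivial point—why the centrality of $[C,D]+CB-AC$ is not an additional relation—via the identity $\alpha+\beta+\gamma=[\delta,D]=0$.
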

\begin{proof}We know from Lemma~\ref{lem:delta}(ii) that $A$, $B$, $\alpha$, $\beta$, $\delta$ generate~$\Re$. Observe that $C=\delta-A-B$ by \eqref{delta} and $D=\frac{1}{2}[A,B]$ by~\eqref{r:D}. The result can now be obtained by either using these two facts to eliminate~$C$, $D$ from the presentation of $\Re$ given in Definition~\ref{defn:Racah} or by using $D=\frac{1}{2}[A,B]$ to eliminate~$D$ from the presentation of $\Re$ given in \cite[Proposition~3.4]{SH:2017-1}.
\end{proof}

In the following proposition, we assert the existence of certain finite-dimensional $\Re$-modules and describe the actions of the generators of $\Re$ on these modules. A reader familiar with the theory of tridiagonal pairs will immediately recognize the form of the matrices representing~$A$ and~$B$ as precisely those given in Terwilliger's 2001 seminal work on tridiagonal pairs \cite[Theorem~3.2]{lp2001}.

\begin{Proposition}\label{prop:Rd}For any $a,b,c\in \F$ and any $d\in \N$ there exists a $(d+1)$-dimensional $\Re$-module $R_d(a,b,c)$ satisfying each of the following conditions:
\begin{enumerate}\itemsep=0pt
\item[$(i)$] There exists an $\F$-basis $\{v_i\}_{i=0}^d$ for $R_d(a,b,c)$ with respect to which the matrices represen\-ting $A$ and $B$ are
\begin{gather*}
\begin{pmatrix}
\theta_0 & & & &{\bf 0}
\\
1 &\theta_1
\\
&1 &\theta_2
 \\
& &\ddots &\ddots
 \\
{\bf 0} & & &1 &\theta_d
\end{pmatrix},
\qquad
\begin{pmatrix}
\theta_0^* &\varphi_1 & & &{\bf 0}
\\
 &\theta_1^* &\varphi_2
\\
 & &\theta_2^* &\ddots
 \\
 & & &\ddots &\varphi_d
 \\
{\bf 0} & & & &\theta_d^*
\end{pmatrix},
\end{gather*}
respectively, where
\begin{gather*}
\theta_i=\big(a+\textstyle\frac{d}{2}-i\big) \big(a+\textstyle\frac{d}{2}-i+1\big), \qquad 0\leq i\leq d,\\
\theta^*_i=\big(b+\textstyle\frac{d}{2}-i\big)\big(b+\textstyle\frac{d}{2}-i+1\big), \qquad 0\leq i\leq d,
\\
\varphi_i=i(i-d-1)\big(a+b+c+\textstyle\frac{d}{2}-i+2\big)\big(a+b-c+\textstyle\frac{d}{2}-i+1\big),
\qquad 1\leq i\leq d.
\end{gather*}

\item[$(ii)$] The elements $\alpha$, $\beta$, $\delta$ act on $R_d (a,b,c)$ as scalar multiplication by
\begin{gather*}
(c-b)(c+b+1)\big(a-\textstyle\frac{d}{2}\big)\big(a+\textstyle\frac{d}{2}+1\big),\\
(a-c)(a+c+1)\big(b-\textstyle\frac{d}{2}\big)\big(b+\textstyle\frac{d}{2}+1\big),\\
\textstyle\frac{d}{2} \big(\textstyle\frac{d}{2}+1\big)+a(a+1)+b(b+1)+c(c+1),
\end{gather*}
respectively.
\end{enumerate}
\end{Proposition}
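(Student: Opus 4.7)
The plan is to construct the purported module $R_d(a,b,c)$ by hand: take a $(d+1)$-dimensional $\F$-vector space with basis $\{v_i\}_{i=0}^d$, define $A$ and $B$ as the tridiagonal operators prescribed in part~(i), and let $\alpha$, $\beta$, $\delta$ act as the scalars specified in part~(ii). I will then appeal to Lemma~\ref{lem:presentationR}, which exhibits $A$, $B$, $\alpha$, $\beta$, $\delta$ as generators of $\Re$ subject to an explicit list of relations, so the construction will be complete once these relations are verified on this basis.

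Among those relations, the eight commutativity conditions $\alpha A = A\alpha$, $\beta A = A\beta$, $\delta A = A\delta$, $\alpha B = B\alpha$, and so on, are automatic because $\alpha$, $\beta$, $\delta$ act as scalars. The substantive task is therefore to verify the two cubic relations \eqref{AAB} and \eqref{ABB}. I would carry this out by direct calculation on the basis: from the matrix description,
\[
A v_i = \theta_i v_i + v_{i+1}, \qquad B v_i = \theta_i^* v_i + \varphi_i v_{i-1},
\]
with the conventions $v_{-1}=v_{d+1}=0$ and $\varphi_0=\varphi_{d+1}=0$. Expanding $A^2 B - 2ABA + BA^2$ on $v_i$ produces a linear combination of $v_{i-1}, v_i, v_{i+1}, v_{i+2}$, and expanding $2A^2 + 2AB + 2BA - 2A\delta + 2\alpha$ on $v_i$ produces a linear combination of the same four vectors. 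A coefficient-by-coefficient comparison reduces \eqref{AAB} to four scalar identities in $a,b,c,d,i$. The relation \eqref{ABB} will be handled by the analogous calculation with the roles of raising and lowering interchanged.

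The main obstacle is the polynomial bookkeeping in those coefficient identities. The calculation is tractable because the parametrization is engineered so that differences such as $\theta_i-\theta_{i+1}$ and $\theta_i^*-\theta_{i+1}^*$ factor as a single linear expression in $a$ or $b$, and $\varphi_i$ is already given as a product of four linear factors. The coefficients of $v_{i+2}$ and $v_{i-1}$ in \eqref{AAB} collapse to one-line identities; the coefficient of $v_{i+1}$ uses the scalar value of $\delta$ from part~(ii) to balance the $\theta^*$-contributions; and the coefficient of $v_i$, which is the most intricate, requires the factored form of $\varphi_i$ to cancel against the scalar value of $\alpha$ specified in part~(ii). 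Once these four identities are confirmed (and, by symmetry, the four analogous identities for \eqref{ABB}), the construction will yield the module $R_d(a,b,c)$ with the properties claimed. I expect the $v_i$-coefficient identity to be the step that carries essentially all the algebraic content of the proposition, since this is where the specific Racah-type values of $\alpha$ and $\beta$ are forced.
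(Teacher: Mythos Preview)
Your proposal is correct and follows exactly the approach the paper takes: the paper's proof simply states that, using Lemma~\ref{lem:presentationR}, the result ``can be verified through routine, though tedious, computations,'' which is precisely the coefficient-by-coefficient check of relations~\eqref{AAB} and~\eqref{ABB} that you outline. Your expansion of what those computations entail is accurate and more detailed than the paper's one-line proof.
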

\begin{proof} Using Lemma \ref{lem:presentationR}, this result can be verified through routine, though tedious, computations.
\end{proof}

In order to state our main result more succinctly, we will use the following conventions and definitions.
Let $d\in \N$ and let $\mathbf{P}=\mathbf{P}_d$ denote the set of all $(a,b,c)\in \F^3$ that satisfy
\begin{gather*}
a+b+c+1,-a+b+c,a-b+c,a+b-c\not\in \big\{\tfrac{d}{2}-i\,\big|\,i=1,2,\ldots,d\big\}.
\end{gather*}
We define an action of the abelian group $\{\pm 1\}^3$ on $\mathbf{P}$ by
\begin{gather*}
(a,b,c)^{(-1,1,1)} = (-a-1,b,c),\qquad
(a,b,c)^{(1,-1,1)} = (a,-b-1,c),\\
(a,b,c)^{(1,1,-1)} = (a,b,-c-1)
\end{gather*}
for all $(a,b,c)\in \mathbf{P}$. We let $\mathbf{P}/\{\pm 1\}^3$ denote the set of the $\{\pm 1\}^3$-orbits of $\mathbf{P}$. For $(a,b,c)\in \mathbf{P}$, let $[a,b,c]$ denote the $\{\pm 1\}^3$-orbit of $\mathbf{P}$ that contains $(a,b,c)$. We are now ready to state the classification of finite-dimensional irreducible $\Re$-modules.

\begin{Theorem}\label{thm:classification} Assume that $\F$ is algebraically closed with $\operatorname{char}\F=0$. Let $d$ denote a nonnegative integer. Let $\mathbf{M}$ denote the set of all isomorphism classes of irreducible $\Re$-modules that have dimension $d+1$. Then there exists a bijection $\mathcal R\colon \mathbf{P}/\{\pm 1\}^3 \to \mathbf{M}$ given by
\begin{gather*}
[a,b,c]\mapsto \hbox{the isomorphism class of $R_d(a,b,c)$}
\end{gather*}
for all $[a,b,c]\in \mathbf{P}/\{\pm 1\}^3$.
\end{Theorem}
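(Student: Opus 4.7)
The plan is to establish the theorem in three stages corresponding to well-definedness, injectivity, and surjectivity of $\mathcal{R}$.

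For well-definedness, I would separately verify: (a) that $R_d(a,b,c)$ is irreducible precisely when $(a,b,c)\in \mathbf{P}$; and (b) that $R_d(a,b,c)\cong R_d(a',b',c')$ whenever $(a,b,c)$ and $(a',b',c')$ lie in the same $\{\pm 1\}^3$-orbit. Part (a) reduces, via the matrix form in Proposition~\ref{prop:Rd}(i), to asking that $\varphi_i\neq 0$ for $1\leq i\leq d$, which is exactly the defining condition of $\mathbf{P}_d$; this is to be proven in Section~\ref{s:irr}. Part (b) is to be established in Section~\ref{s:iso}, presumably by exhibiting explicit intertwiners for each of the three generators of $\{\pm 1\}^3$.

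For injectivity, I would use the scalar actions of $\alpha$, $\beta$, $\delta$ on $R_d(a,b,c)$ computed in Proposition~\ref{prop:Rd}(ii). Each of these three scalars depends on $(a,b,c)$ only through the combinations $a(a+1)$, $b(b+1)$, $c(c+1)$ (and products of these), all of which are invariant under $x\mapsto -x-1$, so the scalars depend only on the orbit $[a,b,c]$. Conversely, viewing the three scalars as three polynomial equations, one checks that the resulting system has the $\{\pm 1\}^3$-orbit as its unique solution set inside $\mathbf{P}$; this determinacy argument is the content of Section~\ref{s:iso}.

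The main point is surjectivity, and here the key tool is the universal property of the infinite-dimensional $\Re$-module from Section~\ref{s:Verma}. Let $V$ be an irreducible $\Re$-module of dimension $d+1$. By Schur's lemma the central elements $\alpha$, $\beta$, $\delta$ act on $V$ as scalars $\alpha_0,\beta_0,\delta_0$. Since $\F$ is algebraically closed, $A$ has some eigenvalue $\theta$ on $V$, so there exists a nonzero vector $v\in V$ with $Av=\theta v$. I would solve for $(a,b,c)\in\F^3$ such that $\theta=\bigl(a+\tfrac{d}{2}\bigr)\bigl(a+\tfrac{d}{2}+1\bigr)$ and the three central scalars of $R_d(a,b,c)$ match $(\alpha_0,\beta_0,\delta_0)$; this is a triangular polynomial system whose solvability rests on algebraic closure. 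The universal property then yields a nonzero $\Re$-homomorphism from the infinite-dimensional module onto a submodule of $V$ sending the distinguished generator to $v$; by irreducibility this map is surjective, and the kernel forces the image to be $(d+1)$-dimensional, whence $V\cong R_d(a,b,c)$. Irreducibility of $V$ combined with stage (a) ensures $(a,b,c)\in\mathbf{P}$.

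The main obstacle is precisely the case where $A$ (equivalently $B$ or $C$) fails to be diagonalizable on $V$, as in Example~\ref{exam:R}; any argument that begins by simultaneously diagonalizing a generator against $\alpha$, $\beta$, $\delta$ (as in~\cite{Rmodule2019}) cannot succeed. The virtue of proceeding through the universal property is that it requires only a single eigenvector of $A$, which is guaranteed by algebraic closure of $\F$, and so circumvents diagonalizability entirely.
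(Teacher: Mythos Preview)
Your three-part skeleton is right, but two of the parts have real gaps.

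\textbf{Irreducibility.} The claim that irreducibility of $R_d(a,b,c)$ reduces to $\varphi_i\neq 0$ for $1\leq i\leq d$, and that this is ``exactly the defining condition of $\mathbf{P}_d$'', is false. The conditions $\varphi_i\neq 0$ encode only two of the four inequalities defining $\mathbf{P}_d$ (those on $a+b+c+1$ and $a+b-c$); the other two (on $-a+b+c$ and $a-b+c$) are the conditions $\phi_i\neq 0$, and they are genuinely needed: when some $\phi_j=0$ the matrix of $B$ in the basis $\{w_i\}$ of Proposition~\ref{prop:iso2} has a zero on its superdiagonal and the module is reducible. The paper obtains necessity via the isomorphism $R_d(a,b,c)\cong R_d(-a-1,b,c)$ and proves sufficiency by an explicit argument with the lower-triangular system $L_{ij}$ (Theorem~\ref{thm:irrR}); neither step is as simple as reading off the $\varphi_i$.

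\textbf{Surjectivity.} Here is the main problem. You propose: take an $A$-eigenvector $v\in V$, solve for $(a,b,c)$ from the eigenvalue and the central scalars, and invoke the universal property. But Proposition~\ref{prop:universal} requires $Bv=\theta_0^* v$ and $(B-\theta_1^*)(A-\theta_0)v=\varphi_1 v$ as well; an arbitrary $A$-eigenvector has no reason to satisfy these. So the sentence ``it requires only a single eigenvector of $A$'' is simply wrong, and this is precisely where the difficulty of the non-diagonalizable case lives. The paper's surjectivity proof (Theorem~\ref{thm:surjective}) is substantially more delicate: one first shows that the $\theta_0^*$-eigenspace of $B$ is invariant under $(B-\theta_1^*)A$ and picks $w$ to be a common eigenvector there; then one builds the basis $w_i=\prod_{h<i}(A-\theta_h)w$, proves by induction on the relation~(\ref{ABB}) that $B$ is upper triangular with diagonal $\{\theta_i^*\}$ in this basis, and pins down the superdiagonal entries via a second-order recurrence forced by~(\ref{AAB}). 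Only after all this does the vector $w$ satisfy the hypotheses of the universal property.

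\textbf{Injectivity.} Your plan to separate orbits via the scalars of $\alpha,\beta,\delta$ would need a nontrivial check: in terms of $A'=a(a+1)$, $B'=b(b+1)$, $C'=c(c+1)$, $D'=\tfrac{d}{2}(\tfrac{d}{2}+1)$ these scalars are $(C'-B')(A'-D')$, $(A'-C')(B'-D')$, and $A'+B'+C'+D'$, and it is not obvious they determine $(A',B',C')$ (for instance if $A'=D'$ the first scalar vanishes identically in $B',C'$). The paper bypasses this by computing the traces of $A$, $B$, $C$ on $R_d(a,b,c)$ (Lemma~\ref{lem:trace}); these recover $a(a+1)$, $b(b+1)$, $c(c+1)$ directly and hence separate orbits immediately.
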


We will give a proof of Theorem~\ref{thm:classification} in Section~\ref{s:proof}.

\section[An infinite-dimensional $\Re$-module and its universal property]{An infinite-dimensional $\boldsymbol{\Re}$-module and its universal property}\label{s:Verma}

In this section we introduce an infinite-dimensional $\Re$-module and its universal property in order to prove Theorem~\ref{thm:classification}. For convenience the following conventions are used throughout the rest of this paper. We let $a$, $b$, $c$, $\nu$ denote any scalars in~$\F$. We define the following families of parameters associated with $a$, $b$, $c$, $\nu$:
\begin{gather}
\theta_i =\big(a+\textstyle\frac{\nu}{2}-i\big)\big(a+\textstyle\frac{\nu}{2}-i+1\big)\qquad \hbox{for all $i\in \Z$}, \label{theta_i}\\
\theta^*_i=\big(b+\textstyle\frac{\nu}{2}-i\big)\big(b+\textstyle\frac{\nu}{2}-i+1\big)\qquad\hbox{for all $i\in \Z$}, \label{theta_is}\\
\phi_i=i(i-\nu-1)\big(a-b+c-\textstyle\frac{\nu}{2}+i\big)\big(a-b-c-\textstyle\frac{\nu}{2}+i-1\big)\qquad
\hbox{for all $i\in \Z$},\label{phi_i}\\
\varphi_i=i(i-\nu-1)\big(a+b+c+\textstyle\frac{\nu}{2}-i+2\big)\big(a+b-c+\textstyle\frac{\nu}{2}-i+1\big)
\qquad\hbox{for all $i\in \Z$},\label{varphi_i}\\
\zeta=(c-b)(c+b+1)\big(a-\textstyle\frac{\nu}{2}\big)\big(a+\textstyle\frac{\nu}{2}+1\big),\label{omega}\\
\zeta^*=(a-c)(a+c+1)\big(b-\textstyle\frac{\nu}{2}\big)\big(b+\textstyle\frac{\nu}{2}+1\big),\label{omages}\\
\eta=\textstyle\frac{\nu}{2} \big(\textstyle\frac{\nu}{2}+1\big)+a(a+1)+b(b+1)+c(c+1).\label{eta}
\end{gather}

\begin{Proposition}\label{prop:Verma}There exists an $\Re$-module $M_\nu (a,b,c)$ satisfying each of the following conditions:
\begin{enumerate}\itemsep=0pt
\item[{\rm (i)}] There exists an $\F$-basis $\{m_i\}_{i=0}^\infty$ for $M_\nu (a,b,c)$ with respect to which the matrices representing $A$ and $B$ are
\begin{gather*}
\begin{pmatrix}
\theta_0 & & & & &{\bf 0}
\\
1 &\theta_1
\\
&1 &\theta_2
 \\
 & &\cdot &\cdot
 \\
& & &\cdot &\cdot
 \\
 {\bf 0} & & & &\cdot &\cdot
\end{pmatrix},
\qquad
\begin{pmatrix}
\theta_0^* &\varphi_1 & & & &{\bf 0}
\\
 &\theta_1^* &\varphi_2
\\
 & &\theta_2^* &\cdot
 \\
 & & &\cdot &\cdot
 \\
& & & &\cdot &\cdot
 \\
 {\bf 0} & & & & &\cdot
\end{pmatrix},
\end{gather*}
respectively.

\item[{\rm (ii)}] The elements $\alpha$, $\beta$, $\delta$ act on $M_\nu (a,b,c)$ as scalar multiplication by $\zeta$, $\zeta^*$, $\eta$, respectively.
\end{enumerate}
\end{Proposition}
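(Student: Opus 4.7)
The plan is to construct $M_\nu(a,b,c)$ directly as the $\F$-vector space $V$ with basis $\{m_i\}_{i=0}^\infty$ and to define the $\Re$-action via the presentation given in Lemma~\ref{lem:presentationR}. Concretely, I would define linear operators $A$, $B$, $\alpha$, $\beta$, $\delta$ on $V$ by declaring
\begin{gather*}
A m_j = \theta_j\, m_j + m_{j+1}, \qquad B m_j = \varphi_j\, m_{j-1} + \theta_j^*\, m_j,
\end{gather*}
(with the convention $m_{-1}=0$, which is consistent since $\varphi_0 = 0$) and declaring $\alpha$, $\beta$, $\delta$ to be scalar multiplication by $\zeta$, $\zeta^*$, $\eta$, respectively. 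The operators $C$ and $D$ are then forced by Lemma~\ref{lem:delta} to act as $\delta - A - B$ and $\tfrac{1}{2}[A,B]$; both are well-defined linear endomorphisms of $V$ because $A$ and $B$ are locally finite on the basis. To conclude that $V$ carries an $\Re$-module structure, by Lemma~\ref{lem:presentationR} it suffices to verify the listed defining relations among these five operators.

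The relations asserting that $\alpha$, $\beta$, $\delta$ commute with $A$, $B$, and with one another are immediate, since $\alpha$, $\beta$, $\delta$ act as scalars. The substantive content therefore lies in verifying the two quadratic tridiagonal-type relations (\ref{AAB}) and (\ref{ABB}). For each of these I would apply both sides to a basis vector $m_j$ and compare coefficients in $\{m_{j-2}, m_{j-1}, m_j, m_{j+1}, m_{j+2}\}$. Each quadratic monomial in $A$ and $B$ produces a small number of terms whose coefficients are polynomial expressions in $\theta_i$, $\theta_i^*$, $\varphi_i$. Substituting the explicit formulas (\ref{theta_i})--(\ref{varphi_i}) turns each coefficient identity into a polynomial identity in $j$ with parameters $a$, $b$, $c$, $\nu$, which can then be checked by direct computation.

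The main obstacle is the coefficient-matching in the middle slot: the coefficient of $m_j$ involves products such as $\theta_j \theta_{j-1}$, $\theta_j \theta_j^*$, $\theta_{j-1}\varphi_j$, and $\varphi_{j+1}$, which must telescope precisely against the scalar contributions $2\zeta$ (resp.\ $-2\zeta^*$) and the terms arising from $2A^2 - 2A\delta$ (resp.\ $2B^2 - 2B\delta$). The parameters in (\ref{theta_i})--(\ref{eta}) are chosen exactly so that these cancellations occur, and the verification mirrors the standard check that the tridiagonal-pair matrices of \cite[Theorem~3.2]{lp2001} satisfy the corresponding tridiagonal relations; no new idea beyond careful bookkeeping is required. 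Once both quadratic relations are established on every basis vector, Lemma~\ref{lem:presentationR} guarantees that the assignments extend to a well-defined $\Re$-module structure on $V$, and by construction this module satisfies (i) and (ii).
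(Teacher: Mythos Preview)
Your proposal is correct and follows exactly the approach the paper takes: the paper's proof consists of the single sentence ``Using Lemma~\ref{lem:presentationR}, this result can be verified through routine computations,'' and your plan simply unpacks what those routine computations entail. Your added detail about coefficient-matching on basis vectors and the role of the scalar actions of $\alpha$, $\beta$, $\delta$ is accurate and helpful, but there is no methodological difference from the paper.
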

\begin{proof}Using Lemma \ref{lem:presentationR}, this result can be verified through routine computations.
\end{proof}

Throughout the rest of this paper we will let $\{m_i\}_{i=0}^\infty$ denote the $\F$-basis for $M_\nu(a,b,c)$ from Proposition~\ref{prop:Verma}(i). The following result is an immediate consequence of Proposition~\ref{prop:Verma}(i).

\begin{Lemma}\label{lem:mi}$m_{j+1}=\prod\limits_{h=i}^j (A-\theta_h)m_i$ for any $i,j\in \N$ with $ i\leq j$.
\end{Lemma}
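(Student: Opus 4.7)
The plan is to extract the single-step recursion from Proposition~\ref{prop:Verma}(i) and then iterate it by induction on $j-i$.

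First I would read off from the matrix of $A$ in Proposition~\ref{prop:Verma}(i) that, for every $h\in\N$,
\begin{gather*}
A m_h = \theta_h m_h + m_{h+1},
\end{gather*}
equivalently $(A-\theta_h)m_h = m_{h+1}$. This is the one-step identity the whole lemma rests on.

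Next I would fix $i\in\N$ and induct on $j$ starting from $j=i$. For the base case $j=i$, the asserted formula reads $m_{i+1}=(A-\theta_i)m_i$, which is exactly the one-step identity above. For the inductive step, assume $m_{j+1}=\prod_{h=i}^{j}(A-\theta_h)m_i$; applying $(A-\theta_{j+1})$ on the left and using the one-step identity with $h=j+1$ yields
\begin{gather*}
\prod_{h=i}^{j+1}(A-\theta_h)m_i = (A-\theta_{j+1})m_{j+1}=m_{j+2},
\end{gather*}
completing the induction.

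I do not foresee a genuine obstacle here: the matrix form of $A$ was engineered precisely to make this telescoping work, and the only subtle point is choosing a convention for the empty/initial product so that the base case matches. I would simply state the one-step identity explicitly before beginning the induction so that the reader sees that the $\theta_h$ appearing in the lemma are the diagonal entries of the matrix of~$A$ displayed in Proposition~\ref{prop:Verma}(i).
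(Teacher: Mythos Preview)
Your argument is correct and is precisely what the paper intends: it states the lemma as an immediate consequence of Proposition~\ref{prop:Verma}(i), i.e., of the one-step identity $(A-\theta_h)m_h=m_{h+1}$ read off from the matrix of $A$, iterated by induction. There is nothing to add.
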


Shortly we will describe the $\Re$-module $M_\nu(a,b,c)$ in an alternate way. To aid us in this endeavor, we first recall a Poincar\'{e}--Birkhoff--Witt basis for $\Re$.

\begin{Lemma}[{\cite[Theorem 5.1]{SH:2017-1}}]\label{lem:basisURA}
The elements
\begin{gather*}
A^i D^j B^k \alpha^r \delta^s \beta^t\qquad \hbox{for all $i,j,k,r,s,t\in \N$}\label{eq:basisURA}
\end{gather*}
form an $\F$-basis of~$\Re$.
\end{Lemma}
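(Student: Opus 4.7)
The plan is to treat $D$ as a generator of $\Re$ via $D = \tfrac{1}{2}[A,B]$ and to eliminate $C$ via $C = \delta - A - B$, so that $\Re$ is generated by $A, B, D, \alpha, \beta, \delta$, and then to establish spanning and linear independence separately.

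For spanning, I would set up a rewriting system that drives every monomial to the claimed normal form $A^i D^j B^k \alpha^r \delta^s \beta^t$. The relevant reductions are: (i) $\alpha, \beta, \delta$ are central and so can be moved past $A, B, D$ to the right; (ii) $BA \to AB - 2D$ from (\ref{r:D}); (iii) the centrality relation $\alpha = [A,D] + AC - BA$, after substituting $C = \delta - A - B$ and $BA = AB - 2D$, yields a rule of the shape $DA \to AD + (\text{terms of lower complexity in } A, D, B)$; (iv) analogously, $\beta = [B,D] + BA - CB$ yields $BD \to DB + (\text{lower-complexity terms})$. Using the graded-lex order on words in $A, D, B$ with $A < D < B$ (total degree first, then lex), each rewrite strictly decreases the order, so the rewriting terminates and every monomial in $\Re$ reduces to a linear combination of the claimed basis elements.

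For linear independence, I would exploit the family of modules $M_\nu(a,b,c)$ from Proposition~\ref{prop:Verma}. Suppose $\sum c_{ijkrst}\, A^i D^j B^k \alpha^r \delta^s \beta^t = 0$ in $\Re$. Applying this to $m_0 \in M_\nu(a,b,c)$ and using that $\alpha, \beta, \delta$ act as the scalars $\zeta, \zeta^*, \eta$ from (\ref{omega})--(\ref{eta}) gives, for each choice of parameters,
\[
\sum_{i,j,k} p_{ijk}(\zeta,\zeta^*,\eta)\, A^i D^j B^k m_0 \;=\; 0 \quad\text{in } M_\nu(a,b,c),
\]
where $p_{ijk}(x,y,z) = \sum_{r,s,t} c_{ijkrst}\, x^r z^s y^t$. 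Two ingredients then finish the proof: (a) for generic parameters, the vectors $\{A^i D^j B^k m_0\}_{i,j,k \in \N}$ are linearly independent in $M_\nu(a,b,c)$; and (b) the three scalar expressions $\zeta, \zeta^*, \eta$ are algebraically independent as elements of $\F[\nu,a,b,c]$, which can be verified by inspecting their leading monomials in (\ref{omega})--(\ref{eta}). Combining (a) and (b) forces every $c_{ijkrst}$ to vanish.

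The main obstacle is step (a). The matrix of $A$ on $\{m_i\}_{i=0}^\infty$ is lower triangular with unit subdiagonal (so $A^i m_0$ is a unit-leading combination of $m_0,\ldots,m_i$ by Lemma~\ref{lem:mi}) and the matrix of $B$ is upper triangular; however $D = \tfrac{1}{2}[A,B]$ has no comparable triangular form, so one must track carefully how $A^i D^j B^k m_0$ sits in the filtration by $\mathrm{span}\{m_0, \ldots, m_N\}$. A clean route is to extend scalars to $\F(\nu,a,b,c)$, where $\varphi_i \neq 0$ for all $i \geq 1$, and to use the $\theta_i^*$-eigenspace decomposition of $B$ to filter $M_\nu(a,b,c)$ by ``degree in $B$''. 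A dimension count on each graded piece then matches the expected count of normal-form monomials $A^i D^j B^k m_0$ in that piece, yielding the required linear independence.
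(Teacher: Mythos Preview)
The paper does not give its own proof of this lemma; it is quoted from \cite[Theorem~5.1]{SH:2017-1}, so there is no in-paper argument to compare yours against.

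Your linear-independence argument, however, contains a genuine error. In $M_\nu(a,b,c)$ the vector $m_0$ is a $B$-eigenvector: by Proposition~\ref{prop:Verma}(i) we have $B m_0 = \theta_0^*\, m_0$, hence $B^k m_0 = (\theta_0^*)^k m_0$ and
\[
A^i D^j B^k\, m_0 \;=\; (\theta_0^*)^k\, A^i D^j\, m_0 \qquad\text{for all } i,j,k\in\N.
\]
For fixed $(i,j)$ the vectors $\{A^i D^j B^k m_0\}_{k\in\N}$ are therefore pairwise proportional, so the family $\{A^i D^j B^k m_0\}_{i,j,k}$ is \emph{never} linearly independent, regardless of the parameters. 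Your step~(a) thus fails outright, and the ``clean route'' via the $B$-eigenspace decomposition cannot help, since $m_0$ already lies in a single one-dimensional $B$-eigenspace and powers of $B$ produce no new directions from it. A possible repair is to absorb the factor $(\theta_0^*)^k$ into the scalar part and prove instead that $\{A^i D^j m_0\}_{i,j}$ is linearly independent over $\F(\nu,a,b,c)$ together with algebraic independence of the \emph{four} quantities $\theta_0^*,\zeta,\zeta^*,\eta\in\F[\nu,a,b,c]$; but this is not what you wrote, and the first claim still requires a genuine argument since, for instance, $Am_0$ and $Dm_0$ both have top term a multiple of $m_1$. (Your spanning sketch also needs more care: with $A<D<B$ the rewrite of $BD$ coming from~(\ref{beta}) produces a $B^2$ term, and $B^2>BD$ in graded-lex, so termination does not follow from the stated order alone.)
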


Let $I_\nu(a,b,c)$ denote the left ideal of $\Re$ generated by the elements
\begin{gather}
B-\theta_0^*,\label{U1}\\
(B-\theta_1^*)(A-\theta_0)-\varphi_1,\label{U2}\\
\alpha-\zeta,\qquad\beta-\zeta^*,\qquad\delta-\eta.
\label{U3}
\end{gather}
We now consider certain cosets of $\Re/I_\nu(a,b,c)$.

\begin{Lemma}\label{lem:cosets}For each $n\in \N$, each of the following holds:
\begin{enumerate}\itemsep=0pt
\item[$(i)$] $BA^n+I_\nu(a,b,c)$ is an $\F$-linear combination of $A^i+I_\nu(a,b,c)$ for all $0\leq i\leq n$,
\item[$(ii)$] $DA^n+I_\nu(a,b,c)$ is an $\F$-linear combination of $A^i+I_\nu(a,b,c)$ for all $0\leq i\leq n+1$,
\item[$(iii)$] $D^n+I_\nu(a,b,c)$ is an $\F$-linear combination of $A^i+I_\nu(a,b,c)$ for all $0\leq i\leq n$.
\end{enumerate}
\end{Lemma}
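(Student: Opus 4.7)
The plan is to prove (i), (ii), (iii) in order, reducing each to the defining generators of $I_\nu(a,b,c)$ together with the presentation relation (\ref{AAB}) from Lemma~\ref{lem:presentationR}. Throughout, write $\bar r := r + I_\nu(a,b,c)$. Since $B - \theta_0^*$, $\alpha - \zeta$, $\beta - \zeta^*$, $\delta - \eta$ all lie in $I_\nu(a,b,c)$, we immediately have $B\bar 1 = \theta_0^* \bar 1$ while $\alpha$, $\beta$, $\delta$ act on $\bar 1$ as the scalars $\zeta$, $\zeta^*$, $\eta$. Moreover, because $\alpha$ and $\delta$ are central in $\Re$, for any $u \in \Re$ we have $u\alpha\bar 1 = \zeta u\bar 1$ and $u\delta\bar 1 = \eta u\bar 1$; this observation will be used repeatedly.

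For (i) we induct on $n$. The case $n=0$ is immediate, and for $n=1$ we expand the generator $(B-\theta_1^*)(A-\theta_0) - \varphi_1 \in I_\nu(a,b,c)$, apply it to $\bar 1$, and solve for $BA\bar 1$ as an $\F$-linear combination of $\bar 1$ and $A\bar 1$. For the inductive step, fix $n \geq 1$ and assume the claim for all indices $\leq n$. Right-multiply relation (\ref{AAB}) by $A^{n-1}$ and rearrange to get
\begin{gather*}
BA^{n+1} = -A^2 BA^{n-1} + 2\, ABA^n + 2\, ABA^{n-1} + 2\, BA^n + 2 A^{n+1} - 2\, A\delta A^{n-1} + 2\, \alpha A^{n-1}.
\end{gather*}
Apply this to $\bar 1$: the last two summands become $-2\eta A^n \bar 1$ and $2\zeta A^{n-1}\bar 1$ by centrality of $\alpha$, $\delta$, while each remaining summand has the form $A^k (BA^m \bar 1)$ with $m \leq n$ and $k+m \leq n+1$, so by the inductive hypothesis each lies in the span of $\{A^i \bar 1\}_{i=0}^{n+1}$. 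Collecting the terms completes the step.

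Part (ii) follows from (i) via $D = \tfrac{1}{2}[A,B]$: write $DA^n\bar 1 = \tfrac{1}{2}\bigl(A\cdot(BA^n\bar 1) - BA^{n+1}\bar 1\bigr)$ and apply (i) to both $BA^n\bar 1$ and $BA^{n+1}\bar 1$. Part (iii) is a straightforward induction on $n$: the base $n=0$ is trivial, and for the step we expand $D^{n+1}\bar 1 = D\cdot D^n \bar 1$ using the inductive hypothesis as $\sum_{i=0}^n c_i\, DA^i \bar 1$ and apply (ii) to each term.

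The main obstacle is the asymmetry of working modulo a left ideal: the congruence $BA \equiv \theta_1^* A + (\text{scalar}) \pmod{I_\nu(a,b,c)}$ cannot be right-multiplied by $A^n$ to obtain the corresponding congruence for $BA^{n+1}$, since $I_\nu(a,b,c)\cdot A^n$ need not lie in $I_\nu(a,b,c)$. The workaround is to work at the $\Re$-level with the full presentation relation (\ref{AAB}) and only then pass to $\Re/I_\nu(a,b,c)$ by applying to $\bar 1$; this is precisely why the inductive step for (i) requires (\ref{AAB}) rather than just the quadratic generator of $I_\nu(a,b,c)$. The rest is careful bookkeeping of the induction indices.
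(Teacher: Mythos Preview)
Your proof is correct and follows essentially the same approach as the paper: induction on $n$ for (i) using the relation (\ref{AAB}) right-multiplied by a power of $A$, then deducing (ii) from $D=\tfrac12[A,B]$ and (iii) by a further induction invoking (ii). The only cosmetic difference is an index shift (you multiply (\ref{AAB}) by $A^{n-1}$ to reach $BA^{n+1}$, whereas the paper multiplies by $A^{n-2}$ to reach $BA^n$), and note that the summand $2A^{n+1}\bar 1$ in your displayed expression is not literally of the form $A^k(BA^m\bar 1)$ but is trivially in the required span.
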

\begin{proof} (i) We proceed by induction on $n$. Since $I_\nu(a,b,c)$ contains the element (\ref{U1}), the statement holds for $n=0$. Since $I_\nu(a,b,c)$ contains both of the elements~(\ref{U1}) and~(\ref{U2}), the statement holds for $n=1$. Now suppose $n\geq 2$. Right multiplying each side of (\ref{AAB}) by $A^{n-2}$ yields that
\begin{gather*}
A^2 B A^{n-2}
-2 A B A^{n-1}
+B A^n
-2 A B A^{n-2}
-2 B A^{n-1}
=2 A^n-2 A^{n-1}\delta+2A^{n-2}\alpha.
\end{gather*}
Since $I_\nu(a,b,c)$ contains each of the elements listed in~(\ref{U3}), it follows that $B A^n$ is congruent to
\begin{gather}\label{ABAn-1}
2 A B A^{n-1}+2 A B A^{n-2}-A^2 B A^{n-2}+2 B A^{n-1}+2 A^n-2\eta A^{n-1}+2\zeta A^{n-2}
\end{gather}
modulo $I_\nu(a,b,c)$. By the inductive hypothesis, the element (\ref{ABAn-1}) is congruent to an $\F$-linear combination of $A^i$, for all $0\leq i\leq n$, modulo $I_\nu(a,b,c)$. Therefore (i) follows.

(ii) Observe that $DA^n=\frac{1}{2}\big(ABA^{n}-BA^{n+1}\big)$ by (\ref{r:D}). In light of this fact, the result now follows from Lemma~\ref{lem:cosets}(i).

(iii) We proceed by induction on $n$. The statement holds trivially for $n=0$. Now suppose that $n\geq 1$. By the inductive hypothesis, $D^n=DD^{n-1}$ is congruent to an $\F$-linear combination of
\begin{gather}\label{DA^i+I}
DA^i, \qquad 0\leq i\leq n-1,
\end{gather}
modulo $I_\nu(a,b,c)$. By Lemma \ref{lem:cosets}(ii) each of the elements listed in~(\ref{DA^i+I}) is an $\F$-linear combination of~$A^k$, for all $0\leq k\leq n$, modulo $I_\nu(a,b,c)$. Therefore the result follows.
\end{proof}

\begin{Lemma}\label{lem:R/I_basis}The $\F$-vector space $\Re/I_\nu(a,b,c)$ is spanned by
\begin{gather*}
A^i+I_\nu(a,b,c) \qquad \hbox{for all $i\in \N$}.
\end{gather*}
\end{Lemma}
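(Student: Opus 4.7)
The plan is to apply the Poincar\'{e}--Birkhoff--Witt basis of $\Re$ from Lemma~\ref{lem:basisURA}, which spans $\Re$ by the monomials $A^iD^jB^k\alpha^r\delta^s\beta^t$, and to show that each such monomial is congruent modulo $I_\nu(a,b,c)$ to an $\F$-linear combination of powers of~$A$.

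The first step is to collapse the central tail $\alpha^r\delta^s\beta^t$ to a scalar. Because $\alpha-\zeta$, $\beta-\zeta^*$, and $\delta-\eta$ all lie in $I_\nu(a,b,c)$ and are central in~$\Re$, a short induction combined with the observation that $yc=cy\in\Re\cdot I_\nu(a,b,c)\subseteq I_\nu(a,b,c)$ for any $y\in\Re$ and any central $c\in I_\nu(a,b,c)$ yields $\alpha^r\delta^s\beta^t-\zeta^r\eta^s(\zeta^*)^t\in I_\nu(a,b,c)$. Invoking centrality once more to move this difference past $A^iD^jB^k$ gives
\begin{gather*}
A^iD^jB^k\alpha^r\delta^s\beta^t \equiv \zeta^r\eta^s(\zeta^*)^t\, A^iD^jB^k \pmod{I_\nu(a,b,c)},
\end{gather*}
so it remains to reduce $A^iD^jB^k$ modulo $I_\nu(a,b,c)$ to a polynomial in~$A$.

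Since $B-\theta_0^*\in I_\nu(a,b,c)$, the identity
\begin{gather*}
B^k-(\theta_0^*)^k = \Bigl(\sum_{l=0}^{k-1}(\theta_0^*)^l B^{k-1-l}\Bigr)(B-\theta_0^*)
\end{gather*}
places $B^k-(\theta_0^*)^k$ in $\Re\cdot(B-\theta_0^*)\subseteq I_\nu(a,b,c)$. Left multiplication by $A^iD^j$ keeps this product inside $I_\nu(a,b,c)$, so $A^iD^jB^k\equiv (\theta_0^*)^k A^iD^j\pmod{I_\nu(a,b,c)}$. By Lemma~\ref{lem:cosets}(iii), $D^j+I_\nu(a,b,c)$ is an $\F$-linear combination of $A^l+I_\nu(a,b,c)$ for $0\leq l\leq j$, and left multiplication by $A^i$ only shifts the exponents. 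This completes the reduction.

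The only subtle point I expect to require care is the interaction between the centrality of $\alpha$, $\beta$, $\delta$ and the fact that $I_\nu(a,b,c)$ is only a left ideal; once this is handled, everything else follows from Lemma~\ref{lem:cosets}(iii) together with a brief chain of elementary manipulations.
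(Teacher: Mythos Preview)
Your proof is correct and follows essentially the same route as the paper's: start from the PBW monomials $A^iD^jB^k\alpha^r\delta^s\beta^t$, kill the $B^k\alpha^r\delta^s\beta^t$ tail using the generators~(\ref{U1}) and~(\ref{U3}) of $I_\nu(a,b,c)$, and then invoke Lemma~\ref{lem:cosets}(iii). The only remark is that your caution about centrality versus left-ideals is more than is needed here: because the PBW ordering places $B^k\alpha^r\delta^s\beta^t$ on the right, left multiplication by $A^iD^j$ already lands $A^iD^jB^k\alpha^r\delta^s\beta^t-(\theta_0^*)^k\zeta^r\eta^s(\zeta^*)^t A^iD^j$ in $\Re\cdot I_\nu(a,b,c)\subseteq I_\nu(a,b,c)$ without any appeal to centrality, so the step you flag as ``subtle'' is in fact immediate.
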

\begin{proof}By Lemma \ref{lem:basisURA}, the $\F$-vector space $\Re/I_\nu(a,b,c)$ is spanned by
\begin{gather}\label{coset1}
A^i D^j B^k \alpha^r \delta^s \beta^t+I_\nu(a,b,c)\qquad \hbox{for all $i,j,k,r,s,t\in \N$}.
\end{gather}
Since $I_\nu(a,b,c)$ contains the elements listed in (\ref{U1}) and (\ref{U3}), each of the elements listed in~(\ref{coset1}) can be expressed as an $\F$-linear combination of
\begin{gather*}
A^i D^j+I_\nu(a,b,c) \qquad \hbox{for all $i,j\in \N$}.
\end{gather*}
The result now follows from these facts along with Lemma~\ref{lem:cosets}(iii).
\end{proof}

We are now ready to give our second description of $M_\nu(a,b,c)$.

\begin{Theorem}\label{thm:M=R/I}There exists a unique $\Re$-module homomorphism
\begin{gather*}
\Phi\colon \ \Re/I_\nu(a,b,c)\to M_\nu(a,b,c)
\end{gather*}
that sends $1+I_\nu(a,b,c)$ to $m_0$. Moreover, $\Phi$ is an isomorphism.
\end{Theorem}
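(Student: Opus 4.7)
The plan is to first establish existence and uniqueness of $\Phi$ by checking that the generators of $I_\nu(a,b,c)$ annihilate $m_0$, and then prove bijectivity via a spanning/independence argument using Lemma \ref{lem:R/I_basis} and Lemma \ref{lem:mi}.

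For existence and uniqueness, since $\Re/I_\nu(a,b,c)$ is a cyclic $\Re$-module generated by $1+I_\nu(a,b,c)$, any $\Re$-module homomorphism out of it is determined by the image of this generator, so uniqueness is automatic once well-definedness is established. To get well-definedness, I will verify that each of the generators \eqref{U1}--\eqref{U3} of $I_\nu(a,b,c)$ kills $m_0$ in $M_\nu(a,b,c)$. Directly from Proposition \ref{prop:Verma}(i), the matrix of $B$ gives $Bm_0 = \theta_0^* m_0$, and the matrix of $A$ gives $(A-\theta_0)m_0 = m_1$; then reading off $(B-\theta_1^*)m_1 = \varphi_1 m_0$ shows that $\bigl((B-\theta_1^*)(A-\theta_0)-\varphi_1\bigr)m_0=0$. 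The three elements in \eqref{U3} annihilate $m_0$ by Proposition \ref{prop:Verma}(ii). Hence the rule $r+I_\nu(a,b,c)\mapsto r\cdot m_0$ defines $\Phi$ unambiguously.

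For surjectivity, I will observe that by Lemma \ref{lem:mi} each basis vector $m_{j+1}$ equals $\prod_{h=0}^{j}(A-\theta_h)\,m_0 = \Phi\!\bigl(\prod_{h=0}^{j}(A-\theta_h)+I_\nu(a,b,c)\bigr)$, and $m_0 = \Phi(1+I_\nu(a,b,c))$, so every basis vector lies in the image.

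For injectivity, Lemma \ref{lem:R/I_basis} tells us that $\{A^i+I_\nu(a,b,c)\}_{i\in\N}$ spans $\Re/I_\nu(a,b,c)$. Their images under $\Phi$ are the vectors $A^i m_0$ in $M_\nu(a,b,c)$. Using Lemma \ref{lem:mi} together with the lower-triangular shape of the matrix of $A$ in Proposition \ref{prop:Verma}(i), a straightforward induction on $i$ shows that $A^i m_0 = m_i + \sum_{k<i} c_{i,k}\,m_k$ for some scalars $c_{i,k}\in\F$, so $\{A^i m_0\}_{i\in\N}$ is linearly independent. Any element of $\ker\Phi$ can be written as $\sum_i c_i (A^i+I_\nu(a,b,c))$ with finitely many nonzero $c_i$, and applying $\Phi$ forces $\sum_i c_i A^i m_0 = 0$, hence all $c_i=0$. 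Therefore $\Phi$ is injective, completing the proof.

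The only real obstacle is the verification that $(B-\theta_1^*)(A-\theta_0)-\varphi_1$ kills $m_0$, which is a two-line matrix computation; the remainder of the argument is a clean spanning-versus-independence bookkeeping built on the previously established lemmas, so I expect no further difficulties.
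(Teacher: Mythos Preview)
Your proposal is correct and follows essentially the same approach as the paper: verify that the generators \eqref{U1}--\eqref{U3} of $I_\nu(a,b,c)$ annihilate $m_0$ to obtain existence (with uniqueness from cyclicity), and then combine the spanning set of Lemma~\ref{lem:R/I_basis} with the fact that its image under $\Phi$ is linearly independent (via the lower-triangular action of $A$ on the basis $\{m_i\}$) to conclude that $\Phi$ is bijective. The only cosmetic difference is that the paper works with the shifted products $\prod_h(A-\theta_h)+I_\nu(a,b,c)$, which map exactly to the $m_i$, whereas you use the monomials $A^i+I_\nu(a,b,c)$, whose images are the $m_i$ plus lower-order terms; either choice yields the same independence-plus-spanning argument.
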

\begin{proof}Consider the $\Re$-module homomorphism $\Psi\colon \Re\to M_\nu(a,b,c)$ that sends $1$ to $m_0$. By Proposition \ref{prop:Verma}(i), the elements (\ref{U1}) and (\ref{U2}) are in the kernel of $\Psi$. By Proposition \ref{prop:Verma}(ii), the elements listed in (\ref{U3}) are also in the kernel of $\Psi$. Hence $I_\nu(a,b,c)$ is contained in the kernel of~$\Psi$. It follows that $\Psi$ induces an $\Re$-module homomorphism $\Phi\colon \Re/I_\nu(a,b,c) \to M_\nu(a,b,c)$ that maps $1+I_\nu(a,b,c)$ to~$m_0$. Observe that $\Phi$ is the unique $\Re$-module homomorphism with the desired property since $\Re/I_\nu(a,b,c)$ is generated by $1+I_\nu(a,b,c)$ as an $\Re$-module.

By Lemma \ref{lem:mi} the homomorphism $\Phi$ sends
\begin{gather}\label{coset3}
\prod_{h=1}^{i-1}(A-\theta_h)+I_\nu(a,b,c)
\end{gather}
to $m_i$ for all $i\in \N$. Since $\{m_i\}_{i=0}^\infty$ are linearly independent, the cosets~(\ref{coset3}) are linearly independent. Combining this with Lemma~\ref{lem:R/I_basis}, we see that the cosets~(\ref{coset3}) are an $\F$-basis for $\Re/I_\nu(a,b,c)$. Therefore $\Phi$ is an isomorphism.
\end{proof}

As a consequence of Theorem~\ref{thm:M=R/I}, the $\Re$-module $M_\nu(a,b,c)$ satisfies the following universal property.

\begin{Proposition}\label{prop:universal}If $V$ is an $\Re$-module which has a vector $v\in V$ satisfying
\begin{gather*}
Bv=\theta_0^* v,\\
(B-\theta_1^*)(A-\theta_0)v=\varphi_1 v,\\
\alpha v=\zeta v,\qquad\beta v=\zeta^* v,\qquad\delta v=\eta v,
\end{gather*}
then there exists a unique $\Re$-module homomorphism $M_\nu(a,b,c)\to V$ that sends~$m_0$ to~$v$.
\end{Proposition}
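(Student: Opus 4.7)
The plan is to reduce the statement to the isomorphism $\Phi\colon \Re/I_\nu(a,b,c)\to M_\nu(a,b,c)$ established in Theorem~\ref{thm:M=R/I}. The hypotheses on the vector $v$ are precisely chosen to say that each of the five generators of $I_\nu(a,b,c)$ listed in \eqref{U1}, \eqref{U2}, \eqref{U3} annihilates $v$. So the natural approach is to build the desired homomorphism by first going through $\Re$ itself and then factoring out $I_\nu(a,b,c)$.

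First I would define the $\Re$-module homomorphism $\Psi\colon \Re\to V$ by $r\mapsto rv$, which is well defined since $V$ is a left $\Re$-module. The five conditions imposed on $v$ translate directly into the assertions that $(B-\theta_0^*)v=0$, $((B-\theta_1^*)(A-\theta_0)-\varphi_1)v=0$, $(\alpha-\zeta)v=0$, $(\beta-\zeta^*)v=0$, $(\delta-\eta)v=0$. Therefore every generator of the left ideal $I_\nu(a,b,c)$ lies in the kernel of $\Psi$, and since $I_\nu(a,b,c)$ is generated as a left ideal by these elements, $I_\nu(a,b,c)\subseteq \ker\Psi$. Thus $\Psi$ descends to an $\Re$-module homomorphism $\overline{\Psi}\colon \Re/I_\nu(a,b,c)\to V$ sending $1+I_\nu(a,b,c)$ to $v$. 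Composing with the inverse of the isomorphism $\Phi$ from Theorem~\ref{thm:M=R/I} yields the $\Re$-module homomorphism $\overline{\Psi}\circ \Phi^{-1}\colon M_\nu(a,b,c)\to V$, which sends $m_0=\Phi(1+I_\nu(a,b,c))$ to $v$, giving existence.

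For uniqueness, I would observe that $\Re/I_\nu(a,b,c)$ is generated as an $\Re$-module by the coset $1+I_\nu(a,b,c)$, so $M_\nu(a,b,c)$ is generated as an $\Re$-module by $m_0$ via $\Phi$. Hence any $\Re$-module homomorphism from $M_\nu(a,b,c)$ to $V$ is completely determined by the image of $m_0$, which forces the constructed map to be the unique such homomorphism.

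There is no serious obstacle here: Theorem~\ref{thm:M=R/I} has already done the difficult work of identifying $M_\nu(a,b,c)$ with the cyclic $\Re$-module presented by the very relations appearing in the hypothesis on $v$, so the present proposition is essentially a restatement of that identification as a universal property.
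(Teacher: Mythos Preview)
Your proof is correct and matches the paper's approach: the paper states Proposition~\ref{prop:universal} as an immediate consequence of Theorem~\ref{thm:M=R/I} without giving a separate argument, and what you have written is precisely the standard unpacking of that deduction. There is nothing to add.
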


For the rest of the present paper, we will consider the case $\nu=d$. Define $N_d(a,b,c)$ to be the $A$-cyclic $\F$-subspace of $M_d(a,b,c)$ generated by the element $m_{d+1}$.

\begin{Lemma}\label{lem:N}$N_d(a,b,c)$ is an $\Re$-submodule of $M_d(a,b,c)$ with the $\F$-basis $\{m_i\}_{i=d+1}^\infty$.
\end{Lemma}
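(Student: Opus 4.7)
The plan is to establish the basis statement first, and then use it together with the generating set for $\Re$ from Lemma~\ref{lem:delta}(ii) to verify that the subspace is stable under the action of every generator.

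First I would pin down the action of $A$ on the basis vectors. Reading off the first matrix in Proposition~\ref{prop:Verma}(i), we have $Am_i=\theta_im_i+m_{i+1}$ for all $i\in\N$, equivalently $m_{i+1}=(A-\theta_i)m_i$. Applying this repeatedly starting from $m_{d+1}$ shows by induction on $j$ that $m_{d+1+j}\in N_d(a,b,c)$ for every $j\in\N$, so $\{m_i\}_{i=d+1}^{\infty}\subseteq N_d(a,b,c)$. Conversely, a second induction (on $k$) using $Am_i=\theta_im_i+m_{i+1}$ shows that $A^km_{d+1}$ is an $\F$-linear combination of $m_{d+1},m_{d+2},\ldots,m_{d+1+k}$, so $N_d(a,b,c)\subseteq\operatorname{span}_\F\{m_i\}_{i=d+1}^\infty$. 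Since $\{m_i\}_{i=0}^\infty$ is a basis of $M_d(a,b,c)$, its tail $\{m_i\}_{i=d+1}^\infty$ is $\F$-linearly independent, and the two inclusions give the asserted basis.

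Next I would check that $N_d(a,b,c)$ is closed under the action of $\Re$. By Lemma~\ref{lem:delta}(ii) together with the fact that $\delta-A-B\in\F[\delta,A,B]$ accounts for $C$ (and $D=\tfrac12[A,B]$), it suffices to check closure under $A$, $B$, and the scalars $\alpha$, $\beta$, $\delta$. Closure under $A$ is immediate from the definition of $N_d(a,b,c)$ as the $A$-cyclic subspace generated by $m_{d+1}$. Closure under $\alpha$, $\beta$, $\delta$ is also immediate because Proposition~\ref{prop:Verma}(ii) asserts that each of these acts as scalar multiplication on $M_d(a,b,c)$ (by $\zeta$, $\zeta^*$, $\eta$ respectively), and scalar multiples of basis vectors in $\{m_i\}_{i\geq d+1}$ remain in $N_d(a,b,c)$.

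The only nontrivial point, and the crux of the lemma, is closure under $B$. Reading off the second matrix in Proposition~\ref{prop:Verma}(i) gives $Bm_i=\varphi_im_{i-1}+\theta_i^*m_i$ for $i\geq1$. For $i\geq d+2$ both $m_{i-1}$ and $m_i$ lie in our basis $\{m_j\}_{j\geq d+1}$ and there is nothing to worry about. For $i=d+1$ the term $\varphi_{d+1}m_d$ would otherwise lie outside $N_d(a,b,c)$, so everything hinges on the identity $\varphi_{d+1}=0$, which follows at once from \eqref{varphi_i} with $\nu=d$ because the factor $i-\nu-1$ vanishes at $i=d+1$. Hence $Bm_{d+1}=\theta_{d+1}^*m_{d+1}\in N_d(a,b,c)$, and $B$ indeed stabilizes $N_d(a,b,c)$. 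Combining everything, $N_d(a,b,c)$ is an $\Re$-submodule of $M_d(a,b,c)$ with the stated basis.

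The only possible obstacle is verifying that the five generators $A$, $B$, $\alpha$, $\beta$, $\delta$ genuinely suffice to generate $\Re$ when acting on a module, which is cleanly handled by Lemma~\ref{lem:delta}(ii); apart from that, the proof reduces to reading off two matrices and observing the single vanishing $\varphi_{d+1}=0$.
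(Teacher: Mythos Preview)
Your proposal is correct and follows essentially the same approach as the paper's proof: identify $\{m_i\}_{i\geq d+1}$ as a basis via the relation $(A-\theta_i)m_i=m_{i+1}$, observe $B$-invariance via $\varphi_{d+1}=0$ from~\eqref{varphi_i}, note that $\delta$ acts by a scalar, and conclude using Lemma~\ref{lem:delta}(ii). The only difference is cosmetic: the paper checks invariance only under $A$, $B$, $\delta$ (which already generate $\Re$ by Lemma~\ref{lem:delta}(ii)), so your additional checks for $\alpha$ and $\beta$ are harmless but superfluous.
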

\begin{proof}Recall from Lemma \ref{lem:mi} that
\begin{gather*}
(A-\theta_i) m_i=m_{i+1}\qquad \hbox{for all $i\geq d+1$}.
\end{gather*}
It follows from this fact that $\{m_i\}_{i=d+1}^\infty$ is an $\F$-basis for $N_d(a,b,c)$.

We now show that $N_d(a,b,c)$ is an $\Re$-submodule of $M_d(a,b,c)$. By Proposition~\ref{prop:Verma}(i),
\begin{gather*}
(B-\theta_i^*)m_i=\varphi_i m_{i-1} \qquad\hbox{for all $i\geq d+1$}.
\end{gather*}
By (\ref{varphi_i}), the scalar $\varphi_{d+1}=0$ when $\nu=d$. Hence $N_d(a,b,c)$ is $B$-invariant. By Proposition~\ref{prop:Verma}(ii), the element $\delta$ acts on $N_d(a,b,c)$ as scalar multiplication by $\eta$.
It now follows from Lemma \ref{lem:delta}(ii) that
 $N_d(a,b,c)$ is an $\Re$-submodule of $M_d(a,b,c)$.
\end{proof}

Recall the $\Re$-module $R_d(a,b,c)$ from Proposition~\ref{prop:Rd}. In the sequel we display how the $\Re$-module $R_d(a,b,c)$ is connected to $M_\nu(a,b,c)$. For convenience we let $\{v_i\}_{i=0}^d$ denote the $\F$-basis for $R_d(a,b,c)$ from Proposition~\ref{prop:Rd}(i) in the rest of this paper.

\begin{Lemma}\label{lem:M/N}There exists a unique $\Re$-module isomorphism
\begin{gather*}
M_d(a,b,c)/N_d(a,b,c)\to R_d(a,b,c)
\end{gather*}
that sends $m_i+N_d(a,b,c)$ to $v_i$ for all $0\leq i\leq d$.
\end{Lemma}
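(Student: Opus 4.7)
The plan is to apply the universal property in Proposition~\ref{prop:universal} with $V = R_d(a,b,c)$ and $v = v_0$. The first step is to verify the five hypotheses. Reading the first column of the matrix representing $A$ in Proposition~\ref{prop:Rd}(i) yields $(A-\theta_0)v_0 = v_1$, while reading the first two columns of the matrix representing $B$ yields $Bv_0 = \theta_0^* v_0$ and $(B-\theta_1^*)v_1 = \varphi_1 v_0$; composing these gives $(B-\theta_1^*)(A-\theta_0)v_0 = \varphi_1 v_0$. The three scalar relations $\alpha v_0 = \zeta v_0$, $\beta v_0 = \zeta^* v_0$, $\delta v_0 = \eta v_0$ are immediate from Proposition~\ref{prop:Rd}(ii). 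Proposition~\ref{prop:universal} then furnishes a unique $\Re$-module homomorphism $\Phi\colon M_d(a,b,c) \to R_d(a,b,c)$ with $\Phi(m_0) = v_0$.

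Next I would determine the image of $\Phi$ on the basis $\{m_i\}_{i=0}^\infty$. Applying Lemma~\ref{lem:mi} with $i=0$ yields $m_j = \prod_{h=0}^{j-1}(A-\theta_h)\,m_0$, so $\Phi(m_j) = \prod_{h=0}^{j-1}(A-\theta_h)\,v_0$ inside $R_d(a,b,c)$. The matrix representing $A$ on $R_d(a,b,c)$ gives $(A-\theta_i)v_i = v_{i+1}$ for $0 \le i \le d-1$ and $(A-\theta_d)v_d = 0$, so an easy induction shows $\Phi(m_i) = v_i$ for $0 \le i \le d$ and $\Phi(m_{d+1}) = 0$. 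Applying Lemma~\ref{lem:mi} once more with $i = d+1$ expresses each $m_j$ with $j \ge d+1$ as a polynomial in $A$ applied to $m_{d+1}$, whence $\Phi(m_j) = 0$ for all $j \ge d+1$. In view of Lemma~\ref{lem:N}, this gives $N_d(a,b,c) \subseteq \ker \Phi$.

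Therefore $\Phi$ descends to an $\Re$-module homomorphism $\overline{\Phi}\colon M_d(a,b,c)/N_d(a,b,c) \to R_d(a,b,c)$ sending $m_i + N_d(a,b,c) \mapsto v_i$ for $0 \le i \le d$. By Lemma~\ref{lem:N} the cosets $\{m_i + N_d(a,b,c)\}_{i=0}^d$ form an $\F$-basis of the quotient, and $\{v_i\}_{i=0}^d$ is an $\F$-basis of $R_d(a,b,c)$, so $\overline{\Phi}$ carries a basis bijectively to a basis and is therefore an $\Re$-module isomorphism. Uniqueness of $\overline{\Phi}$ follows because $m_0 + N_d(a,b,c)$ generates $M_d(a,b,c)/N_d(a,b,c)$ as an $\Re$-module. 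I do not anticipate any genuine obstacle here: the only substantive step is the verification of the universal-property hypotheses from the matrices of Proposition~\ref{prop:Rd}, which is entirely mechanical, and everything thereafter is formal.
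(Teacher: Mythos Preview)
Your proof is correct. It differs from the paper's approach: the paper proceeds by direct comparison of matrices. It observes that by Lemma~\ref{lem:N} the quotient $M_d(a,b,c)/N_d(a,b,c)$ has the $\F$-basis $\{m_i+N_d(a,b,c)\}_{i=0}^d$, and that by Propositions~\ref{prop:Rd}(i),~\ref{prop:Verma}(i) the matrices of $A$ and $B$ with respect to this basis coincide with those for $A$ and $B$ with respect to $\{v_i\}_{i=0}^d$ on $R_d(a,b,c)$; since $\delta$ acts by the same scalar~$\eta$ on both by Propositions~\ref{prop:Rd}(ii),~\ref{prop:Verma}(ii), Lemma~\ref{lem:delta}(ii) shows the obvious linear bijection is an $\Re$-module map. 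Your route instead invokes the universal property (Proposition~\ref{prop:universal}) to produce $\Phi\colon M_d(a,b,c)\to R_d(a,b,c)$, then checks $N_d(a,b,c)\subseteq\ker\Phi$ and passes to the quotient. The paper's argument is shorter and avoids the universal machinery; your argument, on the other hand, is essentially a concrete instance of the mechanism that the paper later abstracts as Proposition~\ref{prop:R}, so it arguably foreshadows that result. Either way, the content is the same and the verification is routine.
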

\begin{proof}By Lemma \ref{lem:N}, $M_d(a,b,c)/N_d(a,b,c)$ is a $(d+1)$-dimensional $\Re$-module with the $\F$-basis
\begin{gather}\label{mi+N}
\{m_i+N_d(a,b,c)\}_{i=0}^d.
\end{gather}
Observe that the matrices representing $A$ and $B$ with respect to the $\F$-basis $\{v_i\}_{i=0}^d$ for $R_d(a,b,c)$ are identical with the matrices representing $A$ and $B$ with respect to the $\F$-basis (\ref{mi+N}) for $M_d(a,b,c)/N_d(a,b,c)$ by Propositions~\ref{prop:Rd}(i) and~\ref{prop:Verma}(i). By Propositions~\ref{prop:Rd}(ii) and~\ref{prop:Verma}(ii), the actions of $\delta$ on $R_d(a,b,c)$ and $M_d(a,b,c)/N_d(a,b,c)$ are scalar multiplication by the same scalar~$\eta$. In light of these comments, the result now follows from Lemma~\ref{lem:delta}(ii).
\end{proof}

\begin{Proposition}\label{prop:R}Suppose that $V$ is an $\Re$-module which has a vector $v\in V$ satisfying
\begin{gather}\label{md+1_ker}
\prod_{i=0}^d (A-\theta_i) v=0.
\end{gather}
If there is an $\Re$-module homomorphism $M_d(a,b,c)\to V$ that sends $m_0$ to~$v$, then there exists an $\Re$-module homomorphism $R_d(a,b,c)\to V$ that sends~$v_0$ to~$v$.
\end{Proposition}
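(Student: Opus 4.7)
The plan is to factor the assumed homomorphism $M_d(a,b,c)\to V$ through the submodule $N_d(a,b,c)$ and then transport along the isomorphism of Lemma~\ref{lem:M/N}. First I would invoke the hypothesis to fix an $\Re$-module homomorphism $\Psi\colon M_d(a,b,c)\to V$ with $\Psi(m_0)=v$. Applying Lemma~\ref{lem:mi} with $i=0$, $j=d$ gives $m_{d+1}=\prod_{h=0}^{d}(A-\theta_h)m_0$, so that
\begin{gather*}
\Psi(m_{d+1})=\prod_{i=0}^d (A-\theta_i)\,\Psi(m_0)=\prod_{i=0}^d (A-\theta_i)\,v=0
\end{gather*}
by the hypothesis~(\ref{md+1_ker}).

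Next, I would use Lemma~\ref{lem:N}, which tells us that $N_d(a,b,c)$ is the $A$-cyclic $\F$-subspace of $M_d(a,b,c)$ generated by $m_{d+1}$. Because $\Psi$ intertwines the action of $A$, we have $\Psi\big(A^k m_{d+1}\big)=A^k\,\Psi(m_{d+1})=0$ for every $k\in\N$, and hence $\Psi$ vanishes on $N_d(a,b,c)$. Consequently $\Psi$ descends to an $\Re$-module homomorphism
\begin{gather*}
\overline{\Psi}\colon\ M_d(a,b,c)/N_d(a,b,c)\to V,\qquad m_0+N_d(a,b,c)\mapsto v.
\end{gather*}

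Finally, I would compose $\overline{\Psi}$ with the inverse of the isomorphism $M_d(a,b,c)/N_d(a,b,c)\to R_d(a,b,c)$ supplied by Lemma~\ref{lem:M/N}, which sends $v_i\mapsto m_i+N_d(a,b,c)$. The resulting $\Re$-module homomorphism $R_d(a,b,c)\to V$ takes $v_0$ to $v$, as required. The argument is almost entirely formal, so there is no substantive obstacle; the only point requiring a brief check is that the $A$-cyclic span of $m_{d+1}$ really does contain every element one needs to quotient out, which is precisely the content of Lemma~\ref{lem:N}.
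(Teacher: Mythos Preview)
Your proof is correct and follows essentially the same route as the paper: show $m_{d+1}\in\ker\Psi$ via Lemma~\ref{lem:mi} and the hypothesis, conclude that $N_d(a,b,c)\subseteq\ker\Psi$, descend to the quotient, and then transport along the isomorphism of Lemma~\ref{lem:M/N}. The only cosmetic slip is that the description of $N_d(a,b,c)$ as the $A$-cyclic span of $m_{d+1}$ is its definition rather than the content of Lemma~\ref{lem:N} (which asserts it is an $\Re$-submodule, needed so that the quotient is an $\Re$-module); otherwise your argument matches the paper's almost verbatim.
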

\begin{proof}Let $\varrho$ denote the $\Re$-module homomorphism $M_d(a,b,c)\to V$ that sends~$m_0$ to~$v$. By Lemma~\ref{lem:mi}, we have
\begin{gather*}
m_{d+1}=\prod_{i=0}^d (A-\theta_i) m_0.
\end{gather*}
Combining this with~(\ref{md+1_ker}), we see that $m_{d+1}$ is in the kernel of $\varrho$. Therefore $N_d(a,b,c)$ is contained in the kernel of~$\varrho$. By Lemma~\ref{lem:N}, there exists an $\Re$-module homomorphism $M_d(a,b,c)/N_d(a,b,c)\to V$ that sends $m_0+N_d(a,b,c)$ to~$v$. The result follows from this fact along with Lemma~\ref{lem:M/N}.
\end{proof}

\section[Conditions for the irreducibility of $R_d(a,b,c)$]{Conditions for the irreducibility of $\boldsymbol{R_d(a,b,c)}$}\label{s:irr}

In this section, we derive the necessary and sufficient conditions for $R_d(a,b,c)$ to be irreducible in terms of the parameters $a$, $b$, $c$, $d$. Throughout this section, we let
\begin{gather}\label{e:wi}
w_i=\prod_{h=0}^{i-1} (A-\theta_{d-h}) v_0, \qquad 0\leq i\leq d.
\end{gather}

\begin{Lemma}\label{lem:irr1}If the $\Re$-module $R_d(a,b,c)$ is irreducible, then each of the following holds:
\begin{enumerate}\itemsep=0pt
\item[$(i)$] $\operatorname{char} \F=0$ or $\operatorname{char} \F>d$,
\item[$(ii)$] $a+b+c+1, a+b-c\not\in \big\{{\frac{d}{2}-i}\,\big|\,i=1,2,\ldots,d\big\}$.
\end{enumerate}
\end{Lemma}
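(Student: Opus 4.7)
The plan is to establish both~(i) and~(ii) by proving the contrapositive: I will show that whenever either condition fails, the module $R_d(a,b,c)$ admits a proper nonzero $\Re$-submodule. To do this uniformly, I first translate the hypothesis into a single statement about the scalars $\varphi_i$ from Proposition~\ref{prop:Rd}(i). Inspecting the factorization
\begin{gather*}
\varphi_i=i(i-d-1)\bigl(a+b+c+\tfrac{d}{2}-i+2\bigr)\bigl(a+b-c+\tfrac{d}{2}-i+1\bigr),
\end{gather*}
as $i$ ranges over $\{1,2,\ldots,d\}$, vanishing of one of the first two factors amounts to some integer in $\{1,2,\ldots,d\}$ being zero in $\F$, which is precisely the failure of~(i); and vanishing of the third (respectively fourth) factor is equivalent, after the reindexing $j=d-i+1$, to $a+b+c+1$ (respectively $a+b-c$) lying in $\{\tfrac{d}{2}-j\mid j=1,2,\ldots,d\}$, which is the failure of~(ii). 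Thus failure of~(i) or~(ii) is equivalent to $\varphi_i=0$ for some $1\leq i\leq d$, and it suffices to prove that this forces reducibility.

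Assume accordingly that $\varphi_i=0$ for some $1\leq i\leq d$, and consider the subspace
\begin{gather*}
W=\operatorname{span}_\F\{v_i,v_{i+1},\ldots,v_d\}
\end{gather*}
of $R_d(a,b,c)$. From the matrix form of $A$ in Proposition~\ref{prop:Rd}(i) we read off $Av_j=\theta_j v_j+v_{j+1}$ for $0\leq j\leq d-1$ and $Av_d=\theta_d v_d$, so $W$ is $A$-invariant. Likewise $Bv_j=\varphi_j v_{j-1}+\theta_j^* v_j$ for $1\leq j\leq d$; for indices $j>i$ both $v_{j-1}$ and $v_j$ lie in $W$, while for $j=i$ the vanishing $\varphi_i=0$ forces $Bv_i=\theta_i^* v_i\in W$. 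Hence $W$ is also $B$-invariant, and by Proposition~\ref{prop:Rd}(ii) it is trivially $\delta$-invariant. Applying Lemma~\ref{lem:delta}(ii), which asserts that $A$, $B$, $\delta$ generate $\Re$, we conclude that $W$ is an $\Re$-submodule of $R_d(a,b,c)$. Since $1\leq i\leq d$, the submodule $W$ contains $v_d$ and excludes $v_0$, so it is both nonzero and proper, contradicting irreducibility.

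The argument is essentially routine: the tail subspace $W$ is built so that $A$-invariance is automatic from the lower-triangular shape of $A$, while the sole obstruction to $B$-invariance at the boundary index $i$ is removed exactly by $\varphi_i=0$. The only part requiring some care is the opening reindexing that repackages the vanishing of the four factors of $\varphi_i$ into the form of conditions~(i) and~(ii); the rest is a direct check.
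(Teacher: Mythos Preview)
Your proof is correct and follows essentially the same approach as the paper: both argue by contrapositive that some $\varphi_i$ vanishes, then exhibit the tail subspace $W=\operatorname{span}_\F\{v_i,\ldots,v_d\}$ as a proper nonzero $\Re$-submodule via $A$-, $B$-, and $\delta$-invariance together with Lemma~\ref{lem:delta}(ii). You merely spell out in more detail the equivalence between the vanishing of $\varphi_i$ and the failure of conditions~(i) and~(ii), whereas the paper simply cites~(\ref{varphi_i}).
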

\begin{proof} Suppose that there is an integer $i$, with $1\leq i\leq d$, such that $\varphi_i=0$. By Proposition~\ref{prop:Rd}, the $\F$-subspace $W$ of $R_d(a,b,c)$ spanned by $\{v_h\}_{h=i}^d$ is invariant under $A$, $B$, $\delta$. It follows from Lemma~\ref{lem:delta}(ii) that $W$ is an $\Re$-submodule of $R_d(a,b,c)$, a contradiction to the irreducibility of $R_d(a,b,c)$. Therefore $\varphi_i\not=0$ for all $1\leq i\leq d$, which is equivalent to~(i) and~(ii) by~(\ref{varphi_i}).
\end{proof}

\begin{Lemma}\label{lem:iso2} The elements $\{w_i\}_{i=0}^d$ form an $\F$-basis for $R_d(a,b,c)$.
\end{Lemma}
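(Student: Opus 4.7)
The plan is to express each $w_i$ in terms of the basis $\{v_j\}_{j=0}^d$ from Proposition~\ref{prop:Rd}(i) and observe that the resulting change-of-basis matrix is unit lower triangular, hence invertible.

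First I would read off from Proposition~\ref{prop:Rd}(i) that the action of $A$ on $R_d(a,b,c)$ satisfies $Av_k=\theta_k v_k+v_{k+1}$ for $0\le k\le d-1$ and $Av_d=\theta_d v_d$. Equivalently, for any $\mu\in\F$, one has $(A-\mu)v_k=(\theta_k-\mu)v_k+v_{k+1}$, with the convention $v_{d+1}=0$. Since the factors $A-\theta_{d-h}$ in~(\ref{e:wi}) are polynomials in $A$, they commute pairwise, so the order of the product is immaterial.

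Next I would prove by induction on $i$ that for $0\le i\le d$ one has
\begin{gather*}
w_i=v_i+\sum_{j=0}^{i-1}c_{ij}v_j
\end{gather*}
for suitable scalars $c_{ij}\in\F$. The base case $i=0$ is just the empty product, giving $w_0=v_0$. For the inductive step, assuming the formula for $w_i$, I would apply $(A-\theta_{d-i})$ to both sides; by the displayed formula above, the top term $v_i$ becomes $v_{i+1}+(\theta_i-\theta_{d-i})v_i$, while every $v_j$ with $j<i$ is sent into the span of $v_0,\ldots,v_i$. This yields $w_{i+1}=v_{i+1}+(\text{combination of }v_0,\ldots,v_i)$, completing the induction.

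Consequently, the matrix expressing $(w_0,\ldots,w_d)$ in the basis $(v_0,\ldots,v_d)$ is unit lower triangular and hence invertible, so $\{w_i\}_{i=0}^d$ is an $\F$-basis for the $(d+1)$-dimensional space $R_d(a,b,c)$. The only point requiring attention is that the induction must not overflow past $v_d$; this is automatic because exactly $d+1$ vectors $w_0,\ldots,w_d$ are produced, matching the dimension of $R_d(a,b,c)$, so no real obstacle arises.
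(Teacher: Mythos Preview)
Your proof is correct and follows essentially the same approach as the paper. The paper's argument is more compressed: it simply notes that $v_i=\prod_{h=0}^{i-1}(A-\theta_h)v_0$ as well, so both $\{v_i\}$ and $\{w_i\}$ are obtained by applying monic polynomials in $A$ of degrees $0,1,\ldots,d$ to $v_0$, from which the unit-triangular change of basis that you spell out explicitly is immediate.
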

\begin{proof} It follows from Proposition \ref{prop:Rd}(i) that
\begin{gather*}
v_i=\prod_{h=0}^{i-1} (A-\theta_{h}) v_0, \qquad 0\leq i\leq d.
\end{gather*}
Comparing this with~(\ref{e:wi}), the result now follows.
\end{proof}

\begin{Proposition}\label{prop:iso2}The $\Re$-module $R_d(a,b,c)$ is isomorphic to the $\Re$-module $R_d(-a-1,b,c)$. Moreover, the matrices representing $A$ and $B$ with respect to the $\F$-basis $\{w_i\}_{i=0}^d$ for $R_d(a,b,c)$ are
\begin{gather}\label{AB_Rd(-a-1,b,c)}
\begin{pmatrix}
\theta_d & & & &{\bf 0}
\\
1 &\theta_{d-1}
\\
&1 &\theta_{d-2}
 \\
& &\ddots &\ddots
 \\
{\bf 0} & & &1 &\theta_0
\end{pmatrix},
\qquad
\begin{pmatrix}
\theta_0^* &\phi_1 & & &{\bf 0}
\\
 &\theta_1^* &\phi_2
\\
 & &\theta_2^* &\ddots
 \\
 & & &\ddots &\phi_d
 \\
{\bf 0} & & & &\theta_d^*
\end{pmatrix},
\end{gather}
respectively.
\end{Proposition}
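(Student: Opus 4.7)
The plan is to invoke the universal properties from Propositions~\ref{prop:universal} and~\ref{prop:R}, applied with parameters $(-a-1,b,c)$ and $\nu=d$, to construct an $\Re$-module homomorphism from $R_d(-a-1,b,c)$ to $R_d(a,b,c)$ sending its standard generator to $v_0$, and then to show that this map is an isomorphism by checking that it realises the spanning set $\{w_i\}_{i=0}^d$.

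First I would record how the scalars in \eqref{theta_i}--\eqref{eta} transform under $a\mapsto -a-1$. A direct substitution shows that $\theta_i^*$, $\zeta$, $\zeta^*$, $\eta$ are all symmetric in $a\leftrightarrow -a-1$ and hence invariant, while $\theta_i$ becomes $\theta_{d-i}$ and $\varphi_i$ becomes $\phi_i$ (the latter after flipping the signs of two linear factors in \eqref{varphi_i} and comparing with \eqref{phi_i}).

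Next I would verify that $v_0\in R_d(a,b,c)$ satisfies the hypotheses of Proposition~\ref{prop:universal} with parameters $(-a-1,b,c)$ and $\nu=d$. The relations $Bv_0=\theta_0^* v_0$, $\alpha v_0=\zeta v_0$, $\beta v_0=\zeta^* v_0$, $\delta v_0=\eta v_0$ are immediate from Proposition~\ref{prop:Rd} together with the invariance noted above. The one nontrivial relation is
\begin{gather*}
(B-\theta_1^*)(A-\theta_d)v_0=\phi_1 v_0,
\end{gather*}
and using $Av_0=\theta_0 v_0+v_1$ together with $Bv_1=\varphi_1 v_0+\theta_1^* v_1$ this reduces to the scalar identity $(\theta_0-\theta_d)(\theta_0^*-\theta_1^*)+\varphi_1=\phi_1$. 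After dividing out a common factor of $d$, this is in turn a difference-of-squares computation: with $u=a+b+\tfrac{d}{2}$ and $v=a-b-\tfrac{d}{2}$ one has $(u+c+1)(u-c)-(v+c+1)(v-c)=(u-v)(u+v+1)=(2b+d)(2a+1)$.

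Proposition~\ref{prop:universal} then supplies an $\Re$-module homomorphism $M_d(-a-1,b,c)\to R_d(a,b,c)$ sending $m_0\mapsto v_0$. Since the matrix of $A$ in Proposition~\ref{prop:Rd}(i) is lower triangular with diagonal $\theta_0,\dots,\theta_d$, Cayley--Hamilton yields $\prod_{i=0}^d(A-\theta_i)=0$ on $R_d(a,b,c)$, and since $(\theta_{d-i})_{i=0}^d$ is merely a permutation of $(\theta_i)_{i=0}^d$, the hypothesis of Proposition~\ref{prop:R} (applied with parameters $(-a-1,b,c)$) is satisfied by $v_0$. Proposition~\ref{prop:R} then produces an $\Re$-module homomorphism $\pi\colon R_d(-a-1,b,c)\to R_d(a,b,c)$ sending the standard basis vector $v_0'$ of $R_d(-a-1,b,c)$ to $v_0$. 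Under $\pi$, the basis vector $v_i'=\prod_{h=0}^{i-1}(A-\theta_{d-h})v_0'$ is sent precisely to $w_i$ defined in \eqref{e:wi}. By Lemma~\ref{lem:iso2}, $\{w_i\}_{i=0}^d$ is a basis of $R_d(a,b,c)$, so $\pi$ is surjective and, by equality of dimensions, an isomorphism. The matrices of $A$ and $B$ on the standard basis of $R_d(-a-1,b,c)$ are given by Proposition~\ref{prop:Rd}(i) with parameters $(-a-1,b,c)$; by the parameter transformations above they coincide numerically with the matrices in \eqref{AB_Rd(-a-1,b,c)}, and transporting along $\pi$ realises them on $\{w_i\}$.

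The main obstacle is the scalar identity $(\theta_0-\theta_d)(\theta_0^*-\theta_1^*)+\varphi_1=\phi_1$ needed to verify the key universal relation; the remainder of the argument is formal bookkeeping with the universal properties already established.
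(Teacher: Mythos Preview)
Your proof is correct and follows essentially the same strategy as the paper: both use Propositions~\ref{prop:universal} and~\ref{prop:R} to produce the relevant homomorphism, and both rest on the same scalar identity (the paper's ``direct calculation'' $(B-\theta_1^*)(A-\theta_0)u_0=\varphi_1 u_0$ in $R_d(-a-1,b,c)$ unwinds to $(\theta_d-\theta_0)(\theta_0^*-\theta_1^*)+\phi_1=\varphi_1$, i.e., the negative of your identity). The only difference is the direction: the paper builds $R_d(a,b,c)\to R_d(-a-1,b,c)$ sending $v_0\mapsto u_0$ and hence $w_i\mapsto u_i$, whereas you build $R_d(-a-1,b,c)\to R_d(a,b,c)$ sending $v_0'\mapsto v_0$ and hence $v_i'\mapsto w_i$. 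Your direction is arguably slightly more direct for reading off the matrices on $\{w_i\}$, but the arguments are mirror images and involve no new ideas on either side.
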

\begin{proof}By Proposition~\ref{prop:Rd}(i), there exists an $\F$-basis $\{u_i\}_{i=0}^d$ for $R_d(-a-1,b,c)$ with respect to which the matrices representing $A$ and $B$ are equal to the matrices displayed in~(\ref{AB_Rd(-a-1,b,c)}). By Lemma~\ref{lem:iso2}, it suffices to show that there is an $\Re$-module homomorphism $R_d(a,b,c)\to R_d(-a-1,\allowbreak b,c)$ that sends $w_i$ to $u_i$ for all $0\leq i\leq d$.

Observe that $B u_0=\theta_0^* u_0$ and a direct calculation yields that
\begin{gather*}
(B-\theta_1^*)(A-\theta_0) u_0=\varphi_1 u_0.
\end{gather*}
By Proposition \ref{prop:Rd}(ii), the elements $\alpha$, $\beta$, $\delta$ act on $R_d(-a-1,b,c)$ as scalar multiplication by~$\zeta$,~$\zeta^*$,~$\eta$, respectively. According to Proposition~\ref{prop:universal}, there exists a unique $\Re$-module homomorphism $M_d(a,b,c)\to R_d(-a-1,b,c)$ that sends~$m_0$ to~$u_0$. By inspecting the matrix representing~$A$ given in~(\ref{AB_Rd(-a-1,b,c)}) we see that
\begin{gather*}
\prod_{i=0}^d (A-\theta_i)u_0=0.
\end{gather*}
Hence there exists a $\Re$-module homomorphism
\begin{gather*}
R_d(a,b,c)\to R_d(-a-1,b,c)
\end{gather*}
that maps $v_0$ to $u_0$ by Proposition~\ref{prop:R}. It now follows from~(\ref{e:wi}) that this homomorphism sends $w_i$ to $u_i$ for all $0\leq i\leq d$. The result follows.
\end{proof}

\begin{Lemma}\label{lem:irr2}If the $\Re$-module $R_d(a,b,c)$ is irreducible, then each of the following holds:
\begin{enumerate}\itemsep=0pt
\item[{\rm (i)}] $\operatorname{char} \F=0$ or $\operatorname{char} \F>d$,

\item[{\rm (ii)}] $a+b+c+1, -a+b+c, a-b+c, a+b-c\not\in \big\{ \frac{d}{2}-i \,\big|\,i=1,2,\ldots,d \big\}$.
\end{enumerate}
\end{Lemma}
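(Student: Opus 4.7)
The plan is to bootstrap from Lemma~\ref{lem:irr1} by leveraging the isomorphism $R_d(a,b,c) \cong R_d(-a-1,b,c)$ provided by Proposition~\ref{prop:iso2}. Applying Lemma~\ref{lem:irr1} directly to $R_d(a,b,c)$ immediately yields part~(i) together with the two conditions $a+b+c+1, a+b-c \notin \{d/2-i \mid i=1,\ldots,d\}$. What remains is to extract the two additional conditions $-a+b+c, a-b+c \notin \{d/2-i \mid i=1,\ldots,d\}$.

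Since isomorphism preserves irreducibility, Proposition~\ref{prop:iso2} implies that $R_d(-a-1,b,c)$ is also irreducible, so Lemma~\ref{lem:irr1}(ii) may be applied to it with the substitution $a \mapsto -a-1$. This yields that both
\begin{gather*}
(-a-1)+b+c+1 = -a+b+c, \qquad (-a-1)+b-c = -a+b-c-1
\end{gather*}
lie outside $S := \{d/2-i \mid i = 1,\ldots,d\}$. The first is precisely one of the missing conditions. For the second, I would observe that $S$ is stable under the involution $x \mapsto -x-1$ on $\F$: pairing the index $i$ with $d+1-i$ gives $(d/2-i) + (d/2-(d+1-i)) = -1$, so $x \in S$ forces $-x-1 \in S$. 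Consequently, the condition $-a+b-c-1 \notin S$ is equivalent to $a-b+c \notin S$, which is the final missing condition. Combining all four conditions proves~(ii).

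The main (and only) subtlety is recognizing the self-duality of $S$ under $x \mapsto -x-1$; this is in fact the same symmetry that makes the $\{\pm 1\}^3$-action on $\mathbf{P}$ well-defined in Section~\ref{s:classification}. Aside from this observation, the argument is a direct application of Lemma~\ref{lem:irr1} and Proposition~\ref{prop:iso2}, with no further computation required.
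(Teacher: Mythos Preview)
Your proof is correct and follows essentially the same approach as the paper's: apply Lemma~\ref{lem:irr1} to both $R_d(a,b,c)$ and the isomorphic module $R_d(-a-1,b,c)$ furnished by Proposition~\ref{prop:iso2}. You are in fact more explicit than the paper in pointing out that the raw output $-a+b-c-1\notin S$ must be converted to $a-b+c\notin S$ via the involution $x\mapsto -x-1$ on $S$, a step the paper's two-line proof leaves implicit.
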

\begin{proof}
By Proposition \ref{prop:iso2}, the $\Re$-module $R_d(a,b,c)$ is isomorphic to $R_d(-a-1,b,c)$. Hence the result follows by applying Lemma~\ref{lem:irr1} to both $R_d(a,b,c)$ and $R_d(-a-1,b,c)$.
\end{proof}

Shortly we will show that the converse of Lemma \ref{lem:irr2} is also true.
To aid us in doing so, we establish the following notation.
We define
\begin{gather*}
R=\prod_{h=1}^{d} (B-\theta^*_h ),\\
S_i=\prod_{h=1}^{d-i}(A-\theta_{d-h+1} ), \qquad 0\leq i\leq d.
\end{gather*}
It follows from Proposition~\ref{prop:Rd}(i) that $Rv$ is a scalar multiple of $v_0$ for all $v\in R_d(a,b,c)$. Thus, for any integers~$i$,~$j$ with $0\leq i,j\leq d$, there exists a unique $L_{ij}\in \F$ such that
\begin{gather}\label{defn:Lij}
RS_i v_j=L_{ij} v_0.
\end{gather}
By examining Proposition \ref{prop:Rd}(i) further, we see that
\begin{gather}
L_{ij}=0,\qquad 0\leq i<j\leq d,\label{lowertriangular}\\
L_{ij}=(\theta_{i}-\theta_{j-1}) L_{i,j-1}+L_{i-1,j-1}, \qquad 1\leq j\leq i \leq d.\label{L:rr}
\end{gather}
It follows from Proposition \ref{prop:iso2} that
\begin{gather}\label{Li0}
L_{i0}=\prod_{h=1}^{i} (\theta_0^*-\theta_{d-h+1}^*) \prod_{h=1}^{d-i} \phi_h, \qquad 0\leq i\leq d.
\end{gather}
Solving the recurrence relation (\ref{L:rr}) with the initial conditions~(\ref{lowertriangular}) and~(\ref{Li0}) yields that
\begin{gather}\label{Lij}
L_{ij}={d-i+j \choose j}{i \choose j}{d \choose j}^{-1}
\prod_{h=1}^{i-j} (\theta_0^*-\theta_{d-h+1}^*)
\prod_{h=1}^{d-i} \phi_h \prod_{h=1}^j \varphi_h, \qquad 0\leq j\leq i\leq d.
\end{gather}

\begin{Theorem}\label{thm:irrR}The $\Re$-module $R_d(a,b,c)$ is irreducible if and only if both of the following conditions hold:
\begin{enumerate}\itemsep=0pt
\item[$(i)$] $\operatorname{char} \F=0$ or $\operatorname{char} \F>d$,
\item[$(ii)$] $a+b+c+1, -a+b+c, a-b+c, a+b-c\not\in \big\{
 \frac{d}{2}-i \,\big|\,i=1,2,\ldots,d\big\}$.
\end{enumerate}
\end{Theorem}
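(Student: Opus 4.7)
The forward direction is Lemma~\ref{lem:irr2}, so the task is to prove the converse: assuming (i) and (ii), establish irreducibility of $R_d(a,b,c)$. The plan is to use the operators $R$ and $S_i$ to reduce an arbitrary nonzero vector of an arbitrary nonzero submodule to a nonzero scalar multiple of $v_0$, and then to propagate $v_0$ through the whole basis.

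First I would observe that conditions (i) and (ii) force each of $\varphi_1,\ldots,\varphi_d$ and $\phi_1,\ldots,\phi_d$ to be nonzero. Indeed, by (\ref{varphi_i}), the vanishing of $\varphi_h$ for some $1\le h\le d$ is equivalent to one of: $h\equiv 0$ or $h\equiv d+1$ in $\F$ (ruled out by (i)), $a+b+c+1\in\{d/2-i\,|\,i=1,\ldots,d\}$, or $a+b-c\in\{d/2-i\,|\,i=1,\ldots,d\}$. The analogous check for $\phi_h$ via (\ref{phi_i}) yields the remaining two conditions of (ii) after rewriting $a-b-c\in\{d/2-i\,|\,i=0,\ldots,d-1\}$ as $-a+b+c\in\{d/2-i\,|\,i=1,\ldots,d\}$. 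Plugging these nonvanishings into the closed form (\ref{Lij}) for $L_{ii}$ (where the first product is empty and the binomial coefficient equals $1$) gives
\begin{gather*}
L_{ii}=\prod_{h=1}^{d-i}\phi_h\prod_{h=1}^i\varphi_h\neq 0,\qquad 0\le i\le d.
\end{gather*}

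Next, let $W$ be a nonzero $\Re$-submodule of $R_d(a,b,c)$ and pick a nonzero $w\in W$. Write $w=\sum_{j=0}^d c_j v_j$ and let $i$ denote the \emph{smallest} index with $c_i\ne 0$. Applying $RS_i$ and invoking (\ref{defn:Lij}) together with the lower-triangularity (\ref{lowertriangular}) (which kills the contribution of every $v_j$ with $j>i$), I would obtain
\begin{gather*}
RS_i w = \sum_{j=i}^{d} c_j L_{ij} v_0 = c_i L_{ii} v_0,
\end{gather*}
so that $v_0\in W$ by the step above. Finally, since $(A-\theta_h)v_h=v_{h+1}$ for $0\le h\le d-1$ by Proposition~\ref{prop:Rd}(i), iterated application of $A$ gives $v_j\in W$ for all $0\le j\le d$, whence $W=R_d(a,b,c)$.

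I expect no genuine obstacle here: the nontrivial input (the explicit formula~(\ref{Lij}) for $L_{ij}$) is already derived in the text, and the argument amounts to verifying that the diagonal entries $L_{ii}$ are precisely what conditions (i), (ii) promise to be nonzero. The only delicate bookkeeping is the $\pm1$ shift in the index set appearing for $-a+b+c$, which I would handle by the explicit reindexing indicated above.
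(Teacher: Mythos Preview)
Your proposal is correct and follows essentially the same approach as the paper's proof. The only cosmetic difference is that the paper argues abstractly that the lower-triangular matrix $L=(L_{ij})$ is nonsingular (so that $\sum_j L_{ij}c_j\ne 0$ for \emph{some} $i$), whereas you exploit the triangular structure more concretely by choosing $i$ to be the smallest index with $c_i\ne 0$, which pins down $RS_i w = c_i L_{ii} v_0$ directly; the underlying idea and the ingredients used are identical.
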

\begin{proof}$(\Rightarrow)$ This is immediate from Lemma \ref{lem:irr2}.

$(\Leftarrow)$ To see the irreducibility of $R_d(a,b,c)$, we assume that $W$ is a nonzero $\Re$-submodule of $R_d(a,b,c)$ and show that $W=R_d(a,b,c)$. Pick a nonzero vector $w\in W$. Since $W$ is invariant under $A$ and $B$, it follows that
\begin{gather}\label{W}
R S_i w\in W, \qquad 0\leq i\leq d.
\end{gather}
Since $\{v_i\}_{i=0}^d$ is an $\F$-basis for $R_d(a,b,c)$, there exist $a_j\in \F$, for $0\leq j\leq d$, such that
\begin{gather*}
w=\sum_{j=0}^d a_j v_j.
\end{gather*}
It now follows from (\ref{defn:Lij}) that
\begin{gather}\label{RSw}
RS_i w=\left(\sum_{j=0}^d L_{ij} a_j\right) v_0, \qquad 0\leq i\leq d.
\end{gather}

Recall the parameters $\{\phi_i\}_{i\in \Z}$ and $\{\varphi_i\}_{i\in \Z}$ from~(\ref{phi_i}) and~(\ref{varphi_i}), respectively. It follows from our assumptions (i) and (ii) that the scalars $\varphi_i\not=0$ and $\phi_i\not=0$ for all $1\leq i\leq d$. Let $L$ denote the $(d+1)\times (d+1)$ matrix, indexed by $0,1,\ldots,d$, with $(i,j)$-entry given by $L_{ij}$ for all $0\leq i,j\leq d$.
By~(\ref{lowertriangular}), the square matrix $L$ is lower triangular. By~(\ref{Lij}), the diagonal entries of~$L$ are
\begin{gather*}
L_{ii}=\prod_{h=1}^{d-i} \phi_h \prod_{h=1}^{i}\varphi_i,\qquad 0\leq i\leq d,
\end{gather*}
which we know to be nonzero. Therefore the matrix $L$ is nonsingular. Since $w$ is nonzero at least one of $\{a_j\}_{j=0}^d$ is nonzero. Hence there exists an integer $i$ with $0\leq i\leq d$ such that
\begin{equation}
\sum_{j=0}^d L_{ij} a_j\not=0.\label{eq:Laneq0}
\end{equation}
Combining \eqref{eq:Laneq0} with (\ref{W}) and (\ref{RSw}), we find that $v_0\in W$. Since the $\Re$-module $R_d(a,b,c)$ is generated by $v_0$, it follows that $W=R_d(a,b,c)$ and so $R_d(a,b,c)$ is irreducible.
\end{proof}

\section[The isomorphism class of the $\Re$-module $R_d(a,b,c)$]{The isomorphism class of the $\boldsymbol{\Re}$-module $\boldsymbol{R_d(a,b,c)}$}\label{s:iso}

In Proposition \ref{prop:iso2}, we showed that the $\Re$-module $R_d(a,b,c)$ is isomorphic to the $\Re$-module $R_d(-a-1,b,c)$. In this section, we discuss the isomorphism class of $R_d(a,b,c)$ in further detail.

\begin{Proposition}\label{prop:iso1}The $\Re$-module $R_d(a,b,c)$ is isomorphic to the $\Re$-module $R_d(a,b,-c-1)$.
\end{Proposition}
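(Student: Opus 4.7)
The plan is to imitate the proof of Proposition~\ref{prop:iso2}, using the universal property of $M_d(a,b,c)$ together with Proposition~\ref{prop:R}. The key observation, which must be checked first, is that the parameters appearing in Proposition~\ref{prop:Rd} are all invariant under the substitution $c \mapsto -c-1$. Specifically, inspecting~\eqref{theta_i}--\eqref{eta}, neither $\theta_i$ nor $\theta_i^*$ involves $c$ at all, while the factors $a+b+c+\frac{\nu}{2}-i+2$ and $a+b-c+\frac{\nu}{2}-i+1$ in $\varphi_i$ get swapped under $c\mapsto -c-1$, leaving $\varphi_i$ unchanged. Similarly, $\zeta$ transforms as $(c-b)(c+b+1) \mapsto -(c+b+1)\cdot(b-c) = (c-b)(c+b+1)$, the scalar $\zeta^*$ transforms as $(a-c)(a+c+1)\mapsto(a+c+1)(a-c)$, and the term $c(c+1)$ in $\eta$ satisfies $c(c+1)=(-c-1)(-c)$. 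So all the scalars $\theta_i$, $\theta_i^*$, $\varphi_i$, $\zeta$, $\zeta^*$, $\eta$ agree for the pairs $(a,b,c)$ and $(a,b,-c-1)$.

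With this invariance in hand, let $\{u_i\}_{i=0}^d$ denote the basis of $R_d(a,b,-c-1)$ from Proposition~\ref{prop:Rd}(i). Because the relevant scalars coincide, the vector $u_0$ satisfies
\begin{gather*}
Bu_0=\theta_0^* u_0, \qquad (B-\theta_1^*)(A-\theta_0)u_0=\varphi_1 u_0,\\
\alpha u_0=\zeta u_0, \qquad \beta u_0=\zeta^* u_0, \qquad \delta u_0=\eta u_0,
\end{gather*}
with exactly the scalars associated to $(a,b,c)$. Hence Proposition~\ref{prop:universal} produces a unique $\Re$-module homomorphism $M_d(a,b,c)\to R_d(a,b,-c-1)$ sending $m_0\mapsto u_0$.

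Next I would verify the hypothesis of Proposition~\ref{prop:R}. Since the matrix representing $A$ on $R_d(a,b,-c-1)$ is lower triangular with diagonal entries $\theta_0,\theta_1,\ldots,\theta_d$, the element $\prod_{i=0}^d(A-\theta_i)$ acts as the zero matrix on $R_d(a,b,-c-1)$; in particular it annihilates $u_0$. Applying Proposition~\ref{prop:R} therefore yields an $\Re$-module homomorphism
\begin{gather*}
\Xi\colon R_d(a,b,c)\to R_d(a,b,-c-1)
\end{gather*}
sending $v_0\mapsto u_0$. Using Proposition~\ref{prop:Rd}(i) on both sides, $v_i=\prod_{h=0}^{i-1}(A-\theta_h)v_0$ and $u_i=\prod_{h=0}^{i-1}(A-\theta_h)u_0$ with identical $\theta_h$, so $\Xi(v_i)=u_i$ for $0\leq i\leq d$. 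Thus $\Xi$ carries a basis to a basis and is an isomorphism.

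There is essentially no obstacle here: the entire content of the proposition is the invariance of the parameters under $c\mapsto -c-1$, after which the machinery of Sections~\ref{s:Verma} and~\ref{s:irr} delivers the isomorphism verbatim in the style of Proposition~\ref{prop:iso2}. The one place to proceed carefully is the sign bookkeeping for $\zeta$, where two sign flips must cancel correctly; but this is an elementary check rather than a conceptual difficulty.
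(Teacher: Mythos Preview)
Your argument is correct, but it is more elaborate than necessary. You make the same key observation as the paper --- that $\theta_i$, $\theta_i^*$, $\varphi_i$, and $\eta$ are all invariant under $c\mapsto -c-1$ --- but then route the construction of the isomorphism through the universal property (Proposition~\ref{prop:universal}) and Proposition~\ref{prop:R}, imitating the proof of Proposition~\ref{prop:iso2}. The paper instead dispenses with that machinery entirely: once the matrices representing $A$ and $B$ in the standard bases of $R_d(a,b,c)$ and $R_d(a,b,-c-1)$ are literally identical and $\delta$ acts by the same scalar on both, Lemma~\ref{lem:delta}(ii) says immediately that the linear map $v_i\mapsto u_i$ is an $\Re$-module isomorphism. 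In Proposition~\ref{prop:iso2} the universal property is genuinely needed because the matrices for $A$ change (the diagonal entries are reversed), so one must \emph{produce} a nontrivial intertwiner; here the matrices coincide, so the identity on coordinates already does the job. Your detour costs nothing logically, but it obscures how trivial this particular isomorphism is.
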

\begin{proof}By Proposition \ref{prop:Rd}(i), there are $\F$-bases for $R_d(a,b,c)$ and $R_d(a,b,-c-1)$ with respect to which the matrices representing~$A$ and~$B$ are the same. By Proposition~\ref{prop:Rd}(ii), the actions of~$\delta$ on $R_d(a,b,c)$ and $R_d(a,b,-c-1)$ are both scalar multiplication by the same scalar~$\eta$. Hence $R_d(a,b,c)$ is isomorphic to $R_d(a,b,-c-1)$ by Lemma~\ref{lem:delta}(ii).
\end{proof}

\begin{Proposition}\label{prop:iso3}If the $\Re$-module $R_d(a,b,c)$ is irreducible, then $R_d(a,b,c)$ is isomorphic to the $\Re$-module $R_d(a,-b-1,c)$.
\end{Proposition}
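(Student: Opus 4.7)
The plan is to mimic the strategy of Proposition~\ref{prop:iso2}: construct a nonzero $\Re$-module homomorphism $\psi\colon R_d(a,b,c)\to R_d(a,-b-1,c)$ by way of Propositions~\ref{prop:universal} and~\ref{prop:R}, then conclude it is an isomorphism using irreducibility. First I would observe that the set $\big\{\tfrac{d}{2}-i:i=1,\ldots,d\big\}$ is stable under the involution $x\mapsto -x-1$, from which one checks that the irreducibility conditions of Theorem~\ref{thm:irrR} are preserved under $(a,b,c)\mapsto (a,-b-1,c)$. Hence $R_d(a,-b-1,c)$ is also irreducible, which makes either module available for an injectivity argument at the end.

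The key step is to identify a nonzero vector $v\in R_d(a,-b-1,c)$ satisfying the hypotheses of Proposition~\ref{prop:universal} for the parameters $(a,b,c)$, namely $Bv=\theta_0^* v$, $(B-\theta_1^*)(A-\theta_0)v=\varphi_1 v$, $\alpha v=\zeta v$, $\beta v=\zeta^* v$, $\delta v=\eta v$. Direct inspection of~(\ref{omega})--(\ref{eta}) shows that $\zeta$, $\zeta^*$, $\eta$ are invariant under $b\mapsto -b-1$, so the three scalar conditions hold automatically. In the standard basis $\{t_i\}_{i=0}^d$ of $R_d(a,-b-1,c)$ the matrix of $B$ is upper bidiagonal with diagonal $\theta_d^*,\theta_{d-1}^*,\ldots,\theta_0^*$ and superdiagonal $\phi_d,\phi_{d-1},\ldots,\phi_1$ (obtained by substituting $b\to -b-1$ in Proposition~\ref{prop:Rd}(i) and comparing with~(\ref{phi_i})--(\ref{varphi_i})). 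Under the irreducibility hypothesis each $\phi_i$ is nonzero, so solving $Bv=\theta_0^* v$ recursively yields a nonzero $v=\sum_{i=0}^d c_i t_i$ with $c_0=1$ and $c_{j+1}=c_j(\theta_0^*-\theta_{d-j}^*)/\phi_{d-j}$ for $0\le j\le d-1$.

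Once $v$ is in hand, Proposition~\ref{prop:universal} supplies an $\Re$-module homomorphism $M_d(a,b,c)\to R_d(a,-b-1,c)$ sending $m_0\mapsto v$. Since the matrix of $A$ on $R_d(a,-b-1,c)$ is lower bidiagonal with diagonal $\theta_0,\theta_1,\ldots,\theta_d$, the Cayley--Hamilton theorem gives $\prod_{i=0}^d(A-\theta_i)=0$ on $R_d(a,-b-1,c)$, and in particular $\prod_{i=0}^d(A-\theta_i)v=0$. Proposition~\ref{prop:R} then produces an $\Re$-module homomorphism $\psi\colon R_d(a,b,c)\to R_d(a,-b-1,c)$ with $\psi(v_0)=v$. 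Because $v\neq 0$, $\psi$ is nonzero; the irreducibility of $R_d(a,b,c)$ forces $\ker\psi=0$, and equality of dimensions forces $\psi$ to be an isomorphism.

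The main obstacle is the verification of $(B-\theta_1^*)(A-\theta_0)v=\varphi_1 v$, which upon expansion in the basis $\{t_i\}$ yields a family of identities in the coefficients of $t_0,t_1,\ldots,t_d$ relating $\varphi_1$, $\phi_i$, $\theta_i$, $\theta_i^*$; each of these reduces to a polynomial identity in $a,b,c,d$ in the spirit of the identity $\varphi_1-\phi_1=(\theta_d-\theta_0)(\theta_0^*-\theta_1^*)$ implicit in the proof of Proposition~\ref{prop:iso2}. This is a direct though tedious calculation, and it is the step that genuinely encodes why $b$ and $-b-1$ are interchangeable on an irreducible $R_d(a,b,c)$.
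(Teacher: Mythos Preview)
Your proposal is correct and follows essentially the same route as the paper's own proof: it constructs the identical eigenvector $v=\sum_{i=0}^d\prod_{h=1}^i\frac{\theta_0^*-\theta_{d-h+1}^*}{\phi_{d-h+1}}\,u_i$ in $R_d(a,-b-1,c)$, checks the hypotheses of Proposition~\ref{prop:universal} (the paper likewise records $(B-\theta_1^*)(A-\theta_0)v=\varphi_1 v$ as a direct calculation), and then invokes Proposition~\ref{prop:R} and irreducibility to conclude. The only cosmetic difference is at the final step, where you argue injectivity of $\psi$ from irreducibility of the source plus equality of dimensions, whereas the paper instead notes that $R_d(a,-b-1,c)$ is also irreducible (via Theorem~\ref{thm:irrR}) and concludes directly.
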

\begin{proof}By Proposition \ref{prop:Rd}(i), there is an $\F$-basis $\{u_i\}_{i=0}^d$ for $R_d(a,-b-1,c)$ with respect to which the matrices representing~$A$ and~$B$ are
\begin{gather}\label{ABRd(a,-b-1,c)}
\begin{pmatrix}
\theta_0 & & & &{\bf 0}
\\
1 &\theta_1
\\
&1 &\theta_2
 \\
& &\ddots &\ddots
 \\
{\bf 0} & & &1 &\theta_d
\end{pmatrix},
\qquad
\begin{pmatrix}
\theta_d^* &\phi_d & & &{\bf 0}
\\
 &\theta_{d-1}^* &\phi_{d-1}
\\
 & &\theta_{d-2}^* &\ddots
 \\
 & & &\ddots &\phi_1
 \\
{\bf 0} & & & &\theta_0^*
\end{pmatrix},
\end{gather}
respectively. Since the $\Re$-module $R_d(a,b,c)$ is irreducible, it follows from Theorem \ref{thm:irrR} that $\phi_i\not=0$ for all $1\leq i\leq d$. Thus we may set
\begin{gather*}
v=\sum_{i=0}^d
\prod_{h=1}^i
\frac{\theta_0^*-\theta_{d-h+1}^*}{\phi_{d-h+1}} u_i.
\end{gather*}
A direct calculation yields that $Bv=\theta_0^* v$ and
\begin{gather*}
(B-\theta_1^*)(A-\theta_0) v
=\varphi_1 v.
\end{gather*}
By Proposition \ref{prop:Rd}(ii), the elements $\alpha$, $\beta$, $\delta$ act on $R_d(a,-b-1,c)$ as scalar multiplication by~$\zeta$,~$\zeta^*$,~$\eta$, respectively. According to Proposition~\ref{prop:universal}, there exists a unique $\Re$-module homomorphism $M_d(a,b,c)\to R_d(a,-b-1,c)$ that maps~$m_0$ to~$v$.
By inspecting the matrix representing~$A$ given in (\ref{ABRd(a,-b-1,c)}), we see that
\begin{gather*}
\prod_{i=0}^d (A-\theta_i)v=0.
\end{gather*}
Hence there exists an $\Re$-module homomorphism
\begin{gather}\label{R(b)->R(-b)}
R_d(a,b,c)\to R_d(a,-b-1,c)
\end{gather}
that sends $v_0$ to $v$ by Proposition \ref{prop:R}. Since the $\Re$-module $R_d(a,b,c)$ is irreducible, the $\Re$-module $R_d(a,-b-1,c)$ is also irreducible by Theorem~\ref{thm:irrR}. Therefore (\ref{R(b)->R(-b)}) is an isomorphism.
\end{proof}

We end this section with a simple combination of Propositions~\ref{prop:iso2},~\ref{prop:iso1}, and~\ref{prop:iso3}.

\begin{Theorem}\label{thm:iso}If the $\Re$-module $R_d(a,b,c)$ is irreducible, then $R_d(a,b,c)$ is isomorphic to each of the $\Re$-modules $R_d(-a-1,b,c)$, $R_d(a,-b-1,c)$ and $R_d(a,b,-c-1)$.
\end{Theorem}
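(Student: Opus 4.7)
The plan is straightforward: I would simply assemble the three isomorphisms from the propositions already established in this section. Proposition~\ref{prop:iso2} gives $R_d(a,b,c)\cong R_d(-a-1,b,c)$, Proposition~\ref{prop:iso1} gives $R_d(a,b,c)\cong R_d(a,b,-c-1)$, and Proposition~\ref{prop:iso3} gives $R_d(a,b,c)\cong R_d(a,-b-1,c)$ provided that $R_d(a,b,c)$ is irreducible. The conjunction of these three isomorphisms is exactly the content of the theorem, so the argument amounts to a single application of each proposition in turn.

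It is worth noting that among the three isomorphisms, only the flip $b\mapsto -b-1$ actually uses the irreducibility hypothesis. Propositions~\ref{prop:iso2} and~\ref{prop:iso1} hold unconditionally: the former was obtained via the universal property of $M_d(a,b,c)$ and the quotient by $N_d(a,b,c)$, while the latter was proved by simply noting that the matrices of $A,B$ (and the scalar by which $\delta$ acts) given in Proposition~\ref{prop:Rd} are unchanged under $c\mapsto -c-1$, with Lemma~\ref{lem:delta}(ii) doing the rest. The $b$-flip, in contrast, was proved by constructing an explicit vector $v\in R_d(a,-b-1,c)$ annihilated by the relations of Proposition~\ref{prop:universal}, whose definition involves division by the coefficients $\phi_{d-h+1}$; the nonvanishing of these scalars is guaranteed precisely by the irreducibility criterion of Theorem~\ref{thm:irrR}, which is why irreducibility is hypothesized in Theorem~\ref{thm:iso} itself.

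Consequently I expect no real obstacle in writing out the proof: the theorem is essentially a cosmetic bundling of the three preceding propositions. The only conceptual point requiring attention is that the three parameter flips $a\mapsto -a-1$, $b\mapsto -b-1$, $c\mapsto -c-1$ generate the group $\{\pm 1\}^3$ acting on $\mathbf{P}$ as described in Section~\ref{s:classification}, so the theorem is exactly the statement that the assignment $(a,b,c)\mapsto R_d(a,b,c)$ is constant on $\{\pm 1\}^3$-orbits of $\mathbf{P}$. This is precisely the well-definedness one will need in Section~\ref{s:proof} in order for the map $\mathcal{R}\colon \mathbf{P}/\{\pm 1\}^3\to \mathbf{M}$ of Theorem~\ref{thm:classification} to make sense.
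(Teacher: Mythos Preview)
Your proposal is correct and matches the paper's approach exactly: the paper itself states Theorem~\ref{thm:iso} as ``a simple combination of Propositions~\ref{prop:iso2},~\ref{prop:iso1}, and~\ref{prop:iso3}'' with no further argument. Your additional remarks on which of the three flips actually require the irreducibility hypothesis, and on the relevance to the well-definedness of the map $\mathcal R$ in Theorem~\ref{thm:classification}, are accurate and helpful commentary but go beyond what the paper records.
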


\section{The proof of Theorem \ref{thm:classification}}\label{s:proof}

Theorems \ref{thm:irrR} and \ref{thm:iso} indicate that the map~$\mathcal R$ in Theorem~\ref{thm:classification} is well-defined. In this section, we shall show that $\mathcal R$ is a bijection.

\begin{Lemma}\label{lem:Schur}Assume that $\F$ is algebraically closed. If $V$ is a finite-dimensional irreducible $\Re$-module, then each central element of~$\Re$ acts on~$V$ as scalar multiplication.
\end{Lemma}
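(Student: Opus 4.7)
The statement is a standard application of Schur's lemma, so the plan is to reduce it to the usual eigenspace argument. Let $z$ denote an arbitrary central element of $\Re$ and let $V$ be a finite-dimensional irreducible $\Re$-module. I would first observe that the action of $z$ on $V$ is an $\F$-linear endomorphism of a nonzero finite-dimensional vector space over the algebraically closed field $\F$, so $z$ has at least one eigenvalue $\lambda \in \F$.

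Next I would consider the $\lambda$-eigenspace $V_\lambda = \{v \in V : zv = \lambda v\}$. By the previous step, $V_\lambda$ is nonzero. The key step is to check that $V_\lambda$ is an $\Re$-submodule of $V$: for any $r \in \Re$ and any $v \in V_\lambda$, the centrality of $z$ gives
\begin{gather*}
z(rv) = (zr)v = (rz)v = r(zv) = r(\lambda v) = \lambda(rv),
\end{gather*}
so $rv \in V_\lambda$. Since $V$ is irreducible and $V_\lambda$ is a nonzero submodule, $V_\lambda = V$, meaning that $z$ acts on all of $V$ as scalar multiplication by $\lambda$.

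There is really no obstacle here, as this is the textbook Schur argument; the only ingredients are algebraic closedness of $\F$ (to guarantee an eigenvalue) and finite-dimensionality of $V$ (so that the endomorphism has a nontrivial spectrum). No specific property of $\Re$ beyond the fact that $z$ commutes with every element is used, so the proof should be essentially one short paragraph in the final write-up.
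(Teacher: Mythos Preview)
Your proof is correct and is precisely the standard Schur's lemma argument; the paper's own proof simply says ``This result follows from applying Schur's lemma to $\Re$,'' so you have just unpacked what the paper cites in one line.
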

\begin{proof}This result follows from applying Schur's lemma to $\Re$.
\end{proof}

\begin{Lemma}\label{lem:theta}For any $i\in \Z$, each of the following hold:
\begin{enumerate}\itemsep=0pt
\item[$(i)$] $\theta_{i+1}+\theta_{i-1}=2(\theta_i+1)$,
\item[$(ii)$] $\theta_{i+1}\theta_{i-1}=\theta_i(\theta_i-2)$.
\end{enumerate}
\end{Lemma}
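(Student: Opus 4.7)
The plan is to prove both identities by direct substitution, exploiting the product form of $\theta_i$. First I would introduce the shorthand $t = a + \tfrac{\nu}{2} - i$, so that by~\eqref{theta_i} we have $\theta_i = t(t+1)$. The key observation is that shifting $i \mapsto i+1$ replaces $t$ by $t-1$, while shifting $i \mapsto i-1$ replaces $t$ by $t+1$. Hence
\begin{gather*}
\theta_{i+1} = (t-1)t, \qquad \theta_{i-1} = (t+1)(t+2).
\end{gather*}

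For part (i), I would simply add these two expressions to obtain $2t^2 + 2t + 2$, and then recognize this as $2(t^2 + t) + 2 = 2\theta_i + 2$, which is $2(\theta_i + 1)$.

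For part (ii), I would rearrange the factors as
\begin{gather*}
\theta_{i+1}\theta_{i-1} = (t-1)t(t+1)(t+2) = \bigl[t(t+1)\bigr]\bigl[(t-1)(t+2)\bigr] = \theta_i \cdot (t^2 + t - 2),
\end{gather*}
and then note that $t^2 + t - 2 = \theta_i - 2$, giving the desired identity.

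Neither step presents a real obstacle; the whole lemma is a one-variable polynomial identity after the substitution $t = a + \nu/2 - i$, and the only subtlety is choosing the grouping $[t(t+1)][(t-1)(t+2)]$ in part~(ii) so that the factor $\theta_i$ appears explicitly.
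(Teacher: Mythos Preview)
Your proof is correct and is exactly the routine verification the paper alludes to; the paper's own proof simply states that the result can be routinely verified using~\eqref{theta_i}, and your substitution $t = a + \nu/2 - i$ is the natural way to carry this out.
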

\begin{proof}The result can be routinely verified using (\ref{theta_i}).
\end{proof}

\begin{Theorem}\label{thm:surjective}Assume that $\F$ is algebraically closed with $\operatorname{char}\F=0$. Let $d$ denote a nonnegative integer. If $V$ is a $(d + 1)$-dimensional irreducible $\Re$-module, then there exist $a, b, c \in \F$ such that the $\Re$-module $R_d(a, b, c)$ is isomorphic to~$V$.
\end{Theorem}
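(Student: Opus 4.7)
The plan is to realize $V$ as a quotient of the universal module $M_d(a,b,c)$ for suitable $a,b,c\in\F$ via Proposition~\ref{prop:universal}, identify that quotient with $R_d(a,b,c)$ using Proposition~\ref{prop:R}, and conclude by a dimension count.

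By Lemma~\ref{lem:Schur} the central elements $\alpha,\beta,\delta$ act on $V$ as scalars $\zeta_V,\zeta^*_V,\eta_V\in\F$. Since $V$ is finite-dimensional over the algebraically closed field $\F$, the operator $B$ admits an eigenvector with some eigenvalue $\lambda\in\F$. Solve $(b+\frac{d}{2})(b+\frac{d}{2}+1)=\lambda$ for $b\in\F$ (so that $\theta_0^*=\lambda$), and solve $\zeta(a,b,c)=\zeta_V$, $\zeta^*(a,b,c)=\zeta^*_V$, $\eta(a,b,c)=\eta_V$ for $a,c\in\F$; both steps are possible by algebraic closure. I would next produce a nonzero $v\in V$ with $Bv=\lambda v$ and $(B-\theta_1^*)(A-\theta_0)v=\varphi_1 v$: using $BA=AB-2D$ and $Bv=\lambda v$ rewrites the second condition as a single linear equation $(2D-(2b+d)A+\kappa)v=0$ in the $\lambda$-eigenspace of $B$, where $\kappa\in\F$ is explicit in $a,b,c,d$, and the task is to show a nonzero solution exists (possibly after refining $a,c$ inside the family compatible with the scalar equations).

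With such $a,b,c,v$ in place, Proposition~\ref{prop:universal} yields an $\Re$-module homomorphism $\Phi\colon M_d(a,b,c)\to V$ with $\Phi(m_0)=v$. Irreducibility of $V$ forces $\Phi$ to be surjective; since each $\Phi(m_i)=\prod_{h=0}^{i-1}(A-\theta_h)v$ lies in $\F[A]v$, we deduce $V=\F[A]v$, and hence $\{\Phi(m_i)\}_{i=0}^d$ is an $\F$-basis of $V$. To descend $\Phi$ through $R_d(a,b,c)=M_d(a,b,c)/N_d(a,b,c)$ via Proposition~\ref{prop:R}, I need $\Phi(m_{d+1})=\prod_{i=0}^d(A-\theta_i)v=0$, equivalently, that the characteristic polynomial of $A$ on $V$ equals $\prod_{i=0}^d(x-\theta_i)$ for the chosen $a$. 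Granted this, the induced map $R_d(a,b,c)\to V$ is a surjection between spaces of equal dimension $d+1$ and is therefore an isomorphism.

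The main obstacle is this rigidity of the $A$-spectrum: the eigenvalues of $A$ on the irreducible Racah module $V$ must, with multiplicities, fit the pattern $\theta_i=(t_0-i)(t_0-i+1)$ for some $t_0=a+\frac{d}{2}\in\F$. I would derive it from the relation~(\ref{AAB}) applied to $A$-eigenvectors, mirroring the split decomposition of tridiagonal pairs: for any $A$-eigenvector $w$ with $Aw=t(t+1)w$, (\ref{AAB}) forces $(A-\theta_-)(A-\theta_+)Bw$ to be a scalar multiple of $w$, where $\theta_\pm=(t\pm 1)(t\pm 1+1)$. This constrains $B$ to move $A$-eigenvectors into only three adjacent $A$-generalized-eigenspaces, and combined with irreducibility of $V$ it forces the required chain structure on the $A$-spectrum and simultaneously secures the solvability of the linear equation for $v$ arranged above.
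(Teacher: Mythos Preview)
Your overall architecture (find $a,b,c$, invoke Proposition~\ref{prop:universal}, then Proposition~\ref{prop:R}, then count dimensions) matches the paper's, but the way you propose to \emph{choose} $a,b,c$ has a genuine gap. You fix $b$ from an arbitrary $B$-eigenvalue and then try to solve the three equations $\zeta=\zeta_V$, $\zeta^*=\zeta^*_V$, $\eta=\eta_V$ for the two remaining unknowns $a,c$. That system is overdetermined; nothing you have written guarantees it is consistent (and for a non-extremal choice of the $B$-eigenvalue it typically will not be, since the formulas for $\zeta,\zeta^*,\eta$ single out a particular pair of $A$- and $B$-eigenvalues). Even if a solution $(a,c)$ happened to exist, there is no reason the resulting $\theta_0=(a+\tfrac{d}{2})(a+\tfrac{d}{2}+1)$ would be an eigenvalue of $A$ on $V$, so the characteristic-polynomial equality you need for Proposition~\ref{prop:R} is unmoored from your choice.

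The paper resolves exactly this by reversing the logic. It picks $a$ from an \emph{extremal} eigenvalue of $A$: one chooses $\theta_0$ to be an $A$-eigenvalue such that the ``neighbour'' $\theta_{-1}$ is \emph{not} an eigenvalue (this exists because $\operatorname{char}\F=0$ and $\dim V<\infty$). The same is done for $b$ using $B$, and $c$ is chosen from the single equation $\eta=\eta_V$. With this extremality, applying (\ref{AAB}) to $v\in V_A(\theta_0)$ gives $(A-\theta_{-1})(A-\theta_0)(A-\theta_1)Bv=0$, and since $\theta_{-1}$ is not an eigenvalue one can cancel that factor to conclude that $(A-\theta_1)B$ preserves $V_A(\theta_0)$; an eigenvector of this restricted operator then exists automatically. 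The analogous construction in $V_B(\theta_0^*)$ produces the vector $w=w_0$ to which Propositions~\ref{prop:universal} and~\ref{prop:R} are applied. The equalities $\alpha w_0=\zeta w_0$ and $\beta w_0=\zeta^* w_0$ are not assumed or solved for; they are \emph{derived} a posteriori by applying (\ref{AAB}) and (\ref{ABB}) to the eigenvectors and by pinning down the superdiagonal entries $\varphi_i'$ of $B$ via the second-order recurrence those relations impose (with boundary conditions $\varphi_0'=\varphi_{d+1}'=0$, whose unique solution in characteristic~$0$ is $\varphi_i'=\varphi_i$). In short: choose $a,b$ from extremal eigenvalues, choose $c$ from $\eta$ alone, and let the relations force the rest. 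Your sketch correctly identifies the three-term propagation coming from (\ref{AAB}); what is missing is the ``edge'' choice of eigenvalue that turns that propagation into an honest invariance statement and thereby supplies the eigenvector $w_0$ and the identification of $\prod_{i=0}^d(x-\theta_i)$ with the characteristic polynomial of $A$.
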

\begin{proof}Given any scalar $\kappa \in \F$, we define
\begin{gather*}
\vartheta_i(\kappa)
=(\kappa-i)(\kappa-i+1)
\qquad
\hbox{for all $i\in \Z$}.
\end{gather*}
Since $\operatorname{char}\F=0$, for any distinct integers $i,j$, the scalars $\vartheta_i(\kappa)$ and $\vartheta_j(\kappa)$ are equal if and only if $i+j=2\kappa+1$. In particular $\{\vartheta_i(\kappa)\}_{i=0}^{-\infty}$ contains infinitely many values.

Since $\F$ is algebraically closed, we may choose a scalar $\kappa\in \F$ such that $\vartheta_0(\kappa)$ is an eigenvalue of~$A$ on~$V$. Since~$V$ is of dimension $d+1$, there are at most $d+1$ distinct eigenvalues of~$A$ on~$V$. Thus, there exists an integer $j\leq 0$ such that $\vartheta_j(\kappa)$ is an eigenvalue of~$A$ but $\vartheta_{j-1}(\kappa)$ is not an eigenvalue of~$A$ on~$V$. Set
\begin{gather*}
a=\kappa-j-\tfrac{d}{2}.
\end{gather*}
Similarly, there exists a scalar $\lambda\in \F$ and an integer $k\leq 0$ such that $\vartheta_k(\lambda)$ is an eigenvalue of~$B$ but $\vartheta_{k-1}(\lambda)$ is not an eigenvalue of~$B$ in~$V$. We set
\begin{gather*}
b=\lambda-k-\tfrac{d}{2}.
\end{gather*}
Observe that under these settings, we have
\begin{gather}
\theta_i = \vartheta_{i+j}(\kappa) \qquad\hbox{for all $i\in \Z$},\label{setting:a}\\
\theta_i^* = \vartheta_{i+k}(\lambda)\qquad\hbox{for all $i\in \Z$}.\label{setting:b}
\end{gather}
By Lemma \ref{lem:Schur}, the element $\delta$ acts on $V$ as scalar multiplication. Since $\F$ is algebraically closed, there exists a scalar $c\in \F$ such that the action of $\delta$ on $V$ is the scalar multiplication by
\begin{gather*}
\eta=\textstyle\frac{d}{2}\big(\frac{d}{2}+1\big)+a(a+1)+b(b+1)+c(c+1).
\end{gather*}
To prove the theorem, it now suffices to show that there exists an $\Re$-module isomorphism from $R_d(a,b,c)$ into~$V$.

Given any $T\in \Re$ and $\theta\in \F$, we let
\begin{gather*}
V_T(\theta)=\{v\in V\,|\, Tv=\theta v\}.
\end{gather*}
Pick any $v\in V_A(\theta_0)$. Applying each side of~(\ref{AAB}) to~$v$ and using Lemma~\ref{lem:theta} to simplify the result, we obtain that
\begin{gather}\label{AAB_V(theta)}
(A-\theta_{-1})(A-\theta_1) B v=2(\theta_0(\theta_0-\eta)+\alpha)v.
\end{gather}
Left multiplying each side of (\ref{AAB_V(theta)}) by $(A-\theta_0)$, we obtain that
\begin{gather*}
(A-\theta_{-1})(A-\theta_0)(A-\theta_1) B v=0.
\end{gather*}
By (\ref{setting:a}), the scalar $\theta_{-1}$ is not an eigenvalue of $A$ in $V$. Hence
\begin{gather*}
(A-\theta_0)(A-\theta_1) B v=0.
\end{gather*}
In other words $(A-\theta_1) B v\in V_A(\theta_0)$ and therefore $V_A(\theta_0)$ is invariant under $(A-\theta_1) B$. Since~$\F$ is algebraically closed, there exists an eigenvector $u$ of $(A-\theta_1) B$ in $V_A(\theta_0)$. Similarly, there exists an eigenvector $w$ of $(B-\theta_1^*)A$ in $V_B(\theta_0^*)$. Define
\begin{gather}
u_i=\prod_{h=0}^{i-1} (B-\theta_h^*) u \qquad \hbox{for all $i\in \N$},\label{vi}\\
w_i=\prod_{h=0}^{i-1} (A-\theta_h) w\qquad \hbox{for all $i\in \N$}. \label{wi}
\end{gather}

We now proceed by induction to show that
\begin{gather}\label{claim}
(A-\theta_i)u_i\in \operatorname{span}_\F\{u_0,u_1,\ldots,u_{i-1}\} \qquad \hbox{for all $i\in \N$}.
\end{gather}
Since $u$ is an eigenvector of $(A-\theta_1) B$ in $V_A(\theta_0)$, the claim is true for $i=0,1$.
Now suppose that $i\geq 2$. Applying each side of~(\ref{ABB}) to $u_{i-2}$, we obtain that
\begin{gather}\label{ABBvi-2}
\big(A B^2 -2 BAB +B^2 A - 2 A B-2 BA-2 B^2+2 \eta B\big) u_{i-2}=-2 \beta u_{i-2}.
\end{gather}
By Lemma \ref{lem:Schur}, the right-hand side of (\ref{ABBvi-2}) is a scalar multiple of~$u_{i-2}$. Using the inductive hypothesis, (\ref{vi}), and Lemma~\ref{lem:theta}(i), we find that the left-hand side of~(\ref{ABBvi-2}) is equal to
\begin{gather}\label{ABBvi-2:LH'}
(A-\theta_i) u_i
\end{gather}
plus an $\F$-linear combination of $u_0,u_1,\ldots,u_{i-1}$. Combining the above results, the claim~(\ref{claim}) follows.

Next, we show that $\{u_i\}_{i=0}^d$ is an $\F$-basis for $V$. Suppose on the contrary that there is an integer $h$ with $0\leq h\leq d-1$ such that $u_{h+1}$ is an $\F$-linear combination of $u_0,u_1,\ldots,u_h$. Let~$W$ denote the $\F$-subspace of~$V$ spanned by $u_0,u_1,\ldots,u_h$.
Since~$W$ is $B$-invariant by~(\ref{vi}) and $A$-invariant by~(\ref{claim}), it follows that $W$ is an $\Re$-submodule of $V$ by Lemma~\ref{lem:delta}(ii). Since~$V$ is irreducible, this forces that $W=V$.
By construction, $W$ is of dimension at most~$d$, a~contradiction. Therefore $\{u_i\}_{i=0}^d$ is an $\F$-basis for $V$.
By a similar argument, it follows that
\begin{gather}\label{claim'}
(B-\theta_i^*) w_i\in \operatorname{span}_\F\{w_0,w_1,\ldots,w_{i-1}\}\qquad \hbox{for all $i\in \N$}
\end{gather}
and $\{w_i\}_{i=0}^d$ is an $\F$-basis for $V$.

By (\ref{claim}), the matrix representing $A$ with respect to the $\F$-basis $\{u_i\}_{i=0}^d$ is upper triangular with diagonal entries $\{\theta_i\}_{i=0}^d$. By the Cayley--Hamilton theorem, we have
\begin{gather}\label{CH}
\prod\limits_{i=0}^d (A-\theta_i) w_0=0.
\end{gather}
In other words $Aw_d=\theta_d w_d$ by (\ref{wi}). Hence the matrix representing $A$ with respect to the $\F$-basis $\{w_i\}_{i=0}^d$ is
\begin{gather*}
\begin{pmatrix}
\theta_0 & & & &{\bf 0}
\\
1 &\theta_1
\\
&1 &\theta_2
 \\
& &\ddots &\ddots
 \\
{\bf 0} & & &1 &\theta_d
\end{pmatrix}.
\end{gather*}

By (\ref{claim'}), the matrix representing $B$ with respect to the $\F$-basis $\{w_i\}_{i=0}^d$ is upper triangular with diagonal entries $\{\theta_i^*\}_{i=0}^d$. We let $\{\varphi_i'\}_{i=1}^d$ denote its superdiagonal entries as follows
\begin{gather}\label{B}
\begin{pmatrix}
\theta_0^* &\varphi_1' & & &{\bf *}
\\
 &\theta_1^* &\varphi_2'
\\
 & &\theta_2^* &\ddots
 \\
 & & &\ddots &\varphi_d'
 \\
{\bf 0} & & & &\theta_d^*
\end{pmatrix}.
\end{gather}
Applying each side of (\ref{AAB}) to $w_{i-1}$, we obtain from the coefficients of $w_i$ that
\begin{gather}\label{varphi'}
\varphi_{i+1}'-2\varphi_i'+\varphi_{i-1}'
=(\theta_i-\theta_{i-1}+2)\theta_i^*
-(\theta_i-\theta_{i-1}-2)\theta_{i-1}^*
+2(\theta_i+\theta_{i-1}-\eta)
\end{gather}
for all $1\leq i\leq d$, where $\varphi_0'$ and $\varphi_{d+1}'$ are interpreted as zero.
It is straightforward to verify that $\{\varphi_i\}_{i=1}^d$ also satisfy the recurrence relation (\ref{varphi'}). Since $\operatorname{char}\F=0$, the corresponding homogeneous recurrence relation
\begin{gather*}
\sigma_{i+1}-2\sigma_i+\sigma_{i-1}=0, \qquad 1\leq i\leq d,
\end{gather*}
with the initial values $\sigma_0=0$ and $\sigma_{d+1}=0$, has the unique solution $\sigma_i=0$ for all $0\leq i\leq d+1$. Therefore $\varphi_i'=\varphi_i$ for all $1\leq i\leq d$.

Up to this point, we have shown that
\begin{gather}
Bw_0=\theta_0^* w_0,\label{even:U1}\\
(B-\theta_1^*)(A-\theta_0)w_0=\varphi_1 w_0,\label{even:U2}\\
\delta w_0=\eta w_0.\label{even:U3}
\end{gather}

Applying each side of (\ref{ABB}) to $w_0$ and using~(\ref{even:U2}) to simplify the resulting equation, we find that
\begin{gather}\label{even:U4}
\beta w_0=\zeta^* w_0.
\end{gather}
Using logic similar to what was used to show (\ref{even:U2}), we obtain $(A-\theta_1)(B-\theta_0^*)u_0=\varphi_1 u_0$. Now, by applying each side of (\ref{AAB}) to $u_0$ and using the above equation to simplify the resulting equation, we find that $\alpha u_0=\zeta u_0$. It follows from Lemma~\ref{lem:Schur} that
\begin{gather}\label{even:U5}
\alpha w_0=\zeta w_0.
\end{gather}
In view of (\ref{even:U1})--(\ref{even:U5}), it follows from Proposition~\ref{prop:universal} that there exists a unique $\Re$-module homomorphism $M_d(a,b,c)\to V$ that sends~$m_0$ to~$w_0$. By Proposition~\ref{prop:Verma}(i), the entries above the superdiagonal in~(\ref{B}) are zero.
Combining the above $\Re$-module homomorphism \mbox{$M_d(a,b,c)\to V$} with~(\ref{CH}), there is an $\Re$-module homomorphism
\begin{gather}\label{R->V}
R_d(a,b,c)\to V
\end{gather}
that sends $v_0$ to $w_0$ by Proposition~\ref{prop:R}. Since the $\Re$-module $V$ is irreducible, the homomorphism~(\ref{R->V}) is onto. Since $R_d(a,b,c)$ and $V$ are both of dimension $d+1$, it follows that (\ref{R->V}) is an isomorphism. The result follows.
\end{proof}

\begin{Lemma}\label{lem:trace}The traces of $A$, $B$, $C$ on the $\Re$-module $R_d(a,b,c)$ are equal to $d+1$ times
\begin{gather*}
a^2+a+\frac{d(d+2)}{12},
\qquad
b^2+b+\frac{d(d+2)}{12},
\qquad
c^2+c+\frac{d(d+2)}{12},
\end{gather*}
respectively.
\end{Lemma}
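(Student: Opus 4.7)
The plan is to read off the traces of $A$ and $B$ directly from Proposition~\ref{prop:Rd}(i), and then handle $C$ via the central element $\delta$ using $C=\delta-A-B$.

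First I would observe that, by Proposition~\ref{prop:Rd}(i), the matrix representing $A$ on $R_d(a,b,c)$ is lower triangular with diagonal entries $\theta_0,\theta_1,\ldots,\theta_d$, where $\theta_i=(a+\tfrac{d}{2}-i)(a+\tfrac{d}{2}-i+1)$. Hence
\begin{gather*}
\mathrm{tr}(A)=\sum_{i=0}^d \theta_i=\sum_{i=0}^d \bigl(a+\tfrac{d}{2}-i\bigr)^2+\sum_{i=0}^d \bigl(a+\tfrac{d}{2}-i\bigr).
\end{gather*}
The second sum is $(d+1)a$ by the symmetry $i\leftrightarrow d-i$. For the first sum, expanding and using $\sum_{i=0}^d i=\tfrac{d(d+1)}{2}$ and $\sum_{i=0}^d i^2=\tfrac{d(d+1)(2d+1)}{6}$ gives the key identity
\begin{gather*}
\sum_{i=0}^d \bigl(\tfrac{d}{2}-i\bigr)^2=\frac{d(d+1)(d+2)}{12},
\end{gather*}
and therefore $\mathrm{tr}(A)=(d+1)\bigl(a^2+a+\tfrac{d(d+2)}{12}\bigr)$, as claimed.

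Next, the same Proposition~\ref{prop:Rd}(i) shows that $B$ is upper triangular on the given basis, with diagonal entries $\theta_0^*,\ldots,\theta_d^*$, obtained from the formulas for $\theta_i$ by replacing $a$ with $b$. The identical calculation then yields $\mathrm{tr}(B)=(d+1)\bigl(b^2+b+\tfrac{d(d+2)}{12}\bigr)$.

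For $C$ I would invoke $\delta=A+B+C$ from~\eqref{delta}, so $\mathrm{tr}(C)=\mathrm{tr}(\delta)-\mathrm{tr}(A)-\mathrm{tr}(B)$. By Proposition~\ref{prop:Rd}(ii), $\delta$ acts on the $(d+1)$-dimensional space $R_d(a,b,c)$ as scalar multiplication by $\eta=\tfrac{d}{2}\bigl(\tfrac{d}{2}+1\bigr)+a(a+1)+b(b+1)+c(c+1)$, so $\mathrm{tr}(\delta)=(d+1)\eta$. Noting that $\tfrac{d}{2}\bigl(\tfrac{d}{2}+1\bigr)=\tfrac{d(d+2)}{4}$ and combining with the expressions for $\mathrm{tr}(A)$ and $\mathrm{tr}(B)$, the contributions of $a(a+1)$ and $b(b+1)$ cancel and one is left with
\begin{gather*}
\mathrm{tr}(C)=(d+1)\left[c(c+1)+\frac{d(d+2)}{4}-\frac{d(d+2)}{6}\right]=(d+1)\left[c^2+c+\frac{d(d+2)}{12}\right],
\end{gather*}
finishing the proof. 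There is no real obstacle here; the only mildly non-trivial step is the power-sum evaluation used for $\mathrm{tr}(A)$, and even that is a standard identity.
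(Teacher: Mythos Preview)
Your argument is correct and follows exactly the strategy of the paper's own proof: read off $\mathrm{tr}(A)$ and $\mathrm{tr}(B)$ from the diagonal entries given in Proposition~\ref{prop:Rd}(i), then obtain $\mathrm{tr}(C)$ from $C=\delta-A-B$ together with the scalar action of~$\delta$ from Proposition~\ref{prop:Rd}(ii). You have simply spelled out the power-sum computations that the paper leaves implicit.
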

\begin{proof}To compute the traces of $A$, $B$ on $R_d(a,b,c)$, use Proposition~\ref{prop:Rd}(i). By~(\ref{delta}), the trace of $C$ is equal to the trace of $\delta$ minus the trace of $A+B$ on $R_d(a,b,c)$. Use the above facts along with Proposition~\ref{prop:Rd}(ii) to compute the trace of $C$ on $R_d(a,b,c)$.
\end{proof}

The following is a quick consequence of Theorems \ref{thm:iso}, \ref{thm:surjective} and Lemma~\ref{lem:trace}.

\begin{Corollary}\label{cor:inj}\looseness=-1 Assume that $\F$ is algebraically closed with $\operatorname{char}\F=0$. Let $V$ denote a $(d + 1)$-dimensional irreducible $\Re$-module. Let $\operatorname{tr} A$, $\operatorname{tr} B$, $\operatorname{tr} C$ denote the traces of $A$, $B$, $C$ on~$V$, respectively. Then the $\Re$-module $R_d(a, b, c)$ is isomorphic to $V$ if and only if $a$, $b$, $c$ are the roots of
\begin{gather*}
x^2+x+\frac{d(d+2)}{12}=\frac{\operatorname{tr} A}{d+1},\\
x^2+x+\frac{d(d+2)}{12}=\frac{\operatorname{tr} B}{d+1},\\
x^2+x+\frac{d(d+2)}{12}=\frac{\operatorname{tr} C}{d+1},
\end{gather*}
respectively.
\end{Corollary}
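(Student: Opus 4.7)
The plan is to deduce the corollary from Theorems \ref{thm:surjective} and \ref{thm:iso} together with Lemma \ref{lem:trace}. The central observation, which drives both directions, is that for each of the three scalar equations in the statement the left-hand side is the quadratic $x^2+x+\frac{d(d+2)}{12}$, so by Vieta's formulas the two roots of each equation sum to $-1$; in particular, if $a$ is one root then $-a-1$ is the other, and likewise for $b$ and $c$. This involution $x\mapsto -x-1$ is exactly the symmetry realized by the isomorphisms of Theorem \ref{thm:iso}.

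For the forward direction, assume $R_d(a,b,c)\cong V$. Trace is an isomorphism invariant of a module, so $\operatorname{tr} A$, $\operatorname{tr} B$, $\operatorname{tr} C$ on $V$ coincide with their values on $R_d(a,b,c)$. Substituting the formulas of Lemma \ref{lem:trace}, dividing by $d+1$, and rearranging gives that $a$, $b$, $c$ satisfy the three stated equations, respectively.

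For the backward direction, apply Theorem \ref{thm:surjective} to produce $a',b',c'\in\F$ with $R_d(a',b',c')\cong V$. Applying the forward direction just established to this isomorphism shows $a'$, $b'$, $c'$ satisfy the very same trace equations as the given $a$, $b$, $c$. Each such equation being quadratic with root-sum $-1$, we conclude that $a\in\{a',-a'-1\}$, $b\in\{b',-b'-1\}$, $c\in\{c',-c'-1\}$. Since $R_d(a',b',c')\cong V$ is irreducible, Theorem \ref{thm:iso} allows us to replace each coordinate independently by its $x\mapsto -x-1$ image without leaving the isomorphism class; composing these three (commuting) one-coordinate moves yields $R_d(a,b,c)\cong R_d(a',b',c')\cong V$. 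The only place requiring any care is this final composition, but it is immediate because Theorem \ref{thm:iso} supplies an isomorphism for each of the three coordinate involutions separately, so all eight sign choices in $\{\pm1\}^3$ are reached.
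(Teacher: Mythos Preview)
Your argument is correct and matches the paper's approach exactly: the paper simply declares the corollary a ``quick consequence of Theorems~\ref{thm:iso}, \ref{thm:surjective} and Lemma~\ref{lem:trace}'', and you have spelled out precisely that deduction. The only point worth noting is that when you iterate the one-coordinate involutions of Theorem~\ref{thm:iso}, each intermediate module is isomorphic to $V$ and hence irreducible, so the hypothesis of Theorem~\ref{thm:iso} is available at every step---you implicitly use this, and it is indeed immediate.
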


We are now ready to prove Theorem~\ref{thm:classification}.

\begin{proof}[Proof of Theorem \ref{thm:classification}] By Theorems~\ref{thm:irrR} and~\ref{thm:iso}, the map $\mathcal R$ is well-defined. By Theorem~\ref{thm:surjective}, the map $\mathcal R$ is onto. Since any element of~$\Re$ has the same trace on the isomorphic finite-dimensional $\Re$-modules, it follows from Lemma \ref{lem:trace} that $\mathcal R$ is one-to-one.
\end{proof}

In Example \ref{exam:R} we showed a five-dimensional irreducible $\Re$-module on which none of~$A$,~$B$,~$C$ is diagonalizable. Note that the $\Re$-module is isomorphic to $R_4\big({-}\frac{1}{2}, -\frac{1}{2}, -\frac{1}{2}\big)$. We finish this paper with the necessary and sufficient conditions for $A$, $B$, $C$ to be diagonalizable on finite-dimensional irreducible $\Re$-modules.

\begin{Theorem}Assume that $\F$ is algebraically closed with $\operatorname{char}\F=0$.
For any $a,b,c\in \F$ and $d\in \N$ satisfying the conditions $(i)$ and $(ii)$ of Theorem~{\rm \ref{thm:irrR}}, the following statements are equivalent:
\begin{enumerate}\itemsep=0pt
\item[$(i)$] $A$ {\rm (}resp. $B${\rm )} {\rm (}resp.~$C${\rm )} is diagonalizable on $R_d(a,b,c)$,
\item[$(ii)$] $a$ {\rm (}resp. $b${\rm )} {\rm (}resp. $c${\rm )} is not in
\begin{gather}\label{diag}
\big\{\tfrac{i-d-1}{2}\,\big|\, i=1,2,\ldots,2d-1\big\}.
\end{gather}
\end{enumerate}
\end{Theorem}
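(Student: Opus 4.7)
The plan is to reduce the diagonalizability question for each of $A,B,C$ to an analysis of the minimal polynomial of a single generator, first establishing the result for $A$ and then deriving the cases of $B$ and $C$ via the cyclic symmetry of the defining relations.

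I would first treat $A$. By Proposition~\ref{prop:Rd}(i), the equality $v_i=\prod_{h=0}^{i-1}(A-\theta_h)v_0$ holds for $0\le i\le d$, so $v_0,Av_0,\ldots,A^d v_0$ span $R_d(a,b,c)$; thus $v_0$ is an $A$-cyclic vector and the minimal polynomial of $A$ on $R_d(a,b,c)$ coincides with its characteristic polynomial $\prod_{i=0}^d(x-\theta_i)$. Hence $A$ is diagonalizable on $R_d(a,b,c)$ if and only if $\theta_0,\ldots,\theta_d$ are pairwise distinct. By~(\ref{theta_i}), for $0\le i<j\le d$ one has $\theta_i=\theta_j$ precisely when $(a+\tfrac{d}{2}-i)+(a+\tfrac{d}{2}-j)=-1$, equivalently $a=\tfrac{i+j-d-1}{2}$. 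Since $i+j$ ranges exactly over $\{1,2,\ldots,2d-1\}$ as $(i,j)$ ranges over such pairs, this gives the equivalence for $A$.

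Next, the plan is to exploit the cyclic $S_3$-symmetry of $\Re$. A direct check using (\ref{r:D}) and (\ref{alpha})--(\ref{delta}) shows that the assignment $\tau\colon A\mapsto C,\; B\mapsto A,\; C\mapsto B$ extends to an $\F$-algebra automorphism of $\Re$ with $\tau(D)=D$, $\tau(\delta)=\delta$, and $\tau(\alpha)=\gamma$, $\tau(\beta)=\alpha$, $\tau(\gamma)=\beta$; the same holds for its inverse $\tau^2$. Given an $\Re$-module $V$, write ${}^{\tau}V$ for the twisted module with action $x\cdot v:=\tau(x)v$; then the action of $A$ on ${}^{\tau}V$ coincides with the action of $C$ on $V$, and irreducibility is preserved under twisting.

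Applied to $V=R_d(a,b,c)$, Theorem~\ref{thm:surjective} gives ${}^{\tau}V\cong R_d(a',b',c')$ for some parameters, and Lemma~\ref{lem:trace} combined with Corollary~\ref{cor:inj} identifies these as $(a',b',c')=(c,a,b)$. Therefore $C$ is diagonalizable on $R_d(a,b,c)$ iff $A$ is diagonalizable on $R_d(c,a,b)$, which by the first paragraph holds iff $c$ avoids the set~(\ref{diag}). Replacing $\tau$ by $\tau^2$ yields ${}^{\tau^2}R_d(a,b,c)\cong R_d(b,c,a)$ and the analogous statement for $B$, finishing the proof.

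The main obstacle, though not a deep one, is verifying that the cyclic shift of $\{A,B,C\}$ really extends to an algebra automorphism of $\Re$ and computing its effect on $\alpha,\beta,\gamma$; this is a mechanical consequence of the cyclic-symmetric form of the defining relations, after which the theorem reduces to the single explicit calculation carried out for $A$.
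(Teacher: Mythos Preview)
Your argument is correct and follows essentially the same approach as the paper: both prove the $A$ case by observing that the minimal polynomial on $R_d(a,b,c)$ is $\prod_{i=0}^d(x-\theta_i)$ and analyzing when the $\theta_i$ are distinct, and both use the cyclic automorphism $A\mapsto C\mapsto B\mapsto A$ together with Corollary~\ref{cor:inj} to transfer the result to $C$. The only minor variation is that the paper treats $B$ directly (the upper-bidiagonal matrix of $B$ with nonzero superdiagonal gives minimal polynomial $\prod_{i=0}^d(x-\theta_i^*)$), whereas you reduce $B$ to the $A$ case via the square of the automorphism; this is a harmless difference in packaging, and your use of Corollary~\ref{cor:inj} only pins down $(a',b',c')$ up to the $\{\pm 1\}^3$-action, but since the set~(\ref{diag}) is stable under $x\mapsto -x-1$ this causes no issue.
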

\begin{proof}
With reference to Proposition \ref{prop:Rd}(i), the minimal polynomial of $A$ in $R_d(a,b,c)$ is
\begin{gather*}
\prod_{i=0}^d (x-\theta_i).
\end{gather*}
Thus $A$ is diagonalizable on $R_d(a,b,c)$ if and only if the scalars $\{\theta_i\}_{i=0}^d$ are mutually distinct. A direct calculation yields that the latter holds if and only if $a$ is not in (\ref{diag}). By a similar argument $B$ is diagonalizable on $R_d(a,b,c)$ if and only if $b$ is not in (\ref{diag}).

To derive the condition for $C$ as diagonalizable on $R_d(a,b,c)$, we consider the $(d+1)$-dimensional $\Re$-module $R_d(b,c,a)$. By Theorem~\ref{thm:irrR} the $\Re$-module $R_d(b,c,a)$ is irreducible. By Definition~\ref{defn:Racah} or \cite[Proposition~4.1]{SH:2017-1} there exists a unique $\F$-algebra automorphism $\varrho$ of $\Re$ that sends $A$, $B$, $C$, $D$ to $C$, $A$, $B$, $D$, respectively. Let $R_d(b,c,a)^\varrho$ denote the $\Re$-module obtained by pulling back the $\Re$-module $R_d(b,c,a)$ via $\varrho$. Observe that $C$ is diagonalizable on $R_d(b,c,a)^\varrho$ if and only if $c$ is not in~(\ref{diag}). Using Corollary~\ref{cor:inj} yields that $R_d(b,c,a)^\varrho$ is isomorphic to the $\Re$-module $R_d(a,b,c)$. The result follows.
\end{proof}

\subsection*{Acknowledgements}

The research of the first author is supported by the Ministry of Science and Technology of Taiwan under the project MOST 106-2628-M-008-001-MY4.

\pdfbookmark[1]{References}{ref}
\LastPageEnding

\end{document}